 	\definecolor{darkred}{rgb}{0.5,0,0}
 	\definecolor{darkgreen}{rgb}{0,0.5,0}
 	\definecolor{darkblue}{rgb}{0,0,0.5}
\newcommand{\ball}[2]{B (#1 , #2)}
\newcommand{\thickset}{E}
\newcommand{\ii}{\mathrm{i}}
\newcommand{\drm}{\mathrm{d}}
\newcommand{\euler}{\mathrm{e}}
\newcommand{\N}{\mathbb{N}}
\newcommand{\R}{\mathbb{R}}
\newcommand{\C}{\mathbb{C}}
\newcommand{\cL}{\mathcal{L}}
\newcommand{\F}{\mathcal{F}}
\newcommand{\dom}{\mathcal{D}}
\newcommand{\1}{\mathbf{1}}
\newcommand{\from}{\colon}
\newcommand{\re}{\operatorname{Re}}
\newcommand{\diverg}{\operatorname{div}}
\newcommand{\grad}{\operatorname{grad}}
\newcommand{\supp}{\operatorname{supp}}
\newcommand{\id}{\operatorname{Id}}
\newcommand{\ran}{\operatorname{Ran}}
\renewcommand{\epsilon}{\varepsilon}
\newtheorem{theorem}{Theorem}[section]
\newtheorem{lemma}[theorem]{Lemma}
\newtheorem{corollary}[theorem]{Corollary}
\theoremstyle{definition}
\newtheorem{example}[theorem]{Example}
\newtheorem{definition}[theorem]{Definition}
\theoremstyle{remark}
\newtheorem{remark}[theorem]{Remark}
\title{Sufficient criteria and sharp geometric conditions for observability in Banach spaces}
\author{Dennis Gallaun}
\affil{Technische Universit\"at Hamburg, Institut f\"ur Mathematik, Am Schwarzenberg-Campus 3, 21073 Hamburg, Germany, \{dennis.gallaun, christian.seifert\}@tuhh.de}
\author[1]{Christian Seifert}
\author{Martin Tautenhahn}
\affil{Technische Universit\"at Chemnitz, Fakult\"at f\"ur Mathematik, 09107 Chemnitz, Germany, martin.tautenhahn@mathematik.tu-chemnitz.de}
\date{\vspace{-7ex}}
\begin{document}

\maketitle

\begin{abstract}
Let $X,Y$ be Banach spaces, $(S_t)_{t \geq 0}$ a $C_0$-semigroup on $X$, $-A$ the corresponding infinitesimal generator on $X$, $C$ a bounded linear operator from $X$ to $Y$, and $T > 0$. We consider the system
\[
  \dot{x}(t)  = -Ax(t), \quad y(t) = Cx(t), \quad t\in (0,T], \quad  x(0) = x_0 \in X.
\]
We provide sufficient conditions such that this system satisfies a final state observability estimate in $L_r ((0,T) ; Y)$, $r \in [1,\infty]$. These sufficient conditions are given by an uncertainty relation and a dissipation estimate. Our approach unifies and generalizes the respective advantages from earlier results obtained in the context of Hilbert spaces.
As an application we consider the example where $A$ is an elliptic operator in $L_p(\mathbb{R}^d)$ for $1<p<\infty$ and where $C = \mathbf{1}_\thickset$ is the restriction onto a thick set $\thickset \subset \mathbb{R}^d$. In this case, we show that the above system satisfies a final state observability estimate if and only if $\thickset \subset \mathbb{R}^d$ is a thick set. Finally, we make use of the well-known relation between observability and null-controllability of the predual system and investigate bounds on the corresponding control costs.
\\[1ex]
\textsf{\textbf{Mathematics Subject Classification (2010).}} 47D06, 35Q93, 47N70, 93B05, 93B07.
\\[1ex]
\textbf{\textsf{Keywords.}} Observability estimate, Banach space, $C_0$-semigroups, elliptic operators, null-con\-trollabi\-li\-ty, control costs
\end{abstract}
\section{Introduction}
Let $X,Y$ be Banach spaces, $(S_t)_{t \geq 0}$ a $C_0$-semigroup on $X$, $-A$ the corresponding infinitesimal generator on $X$, and $C$ a bounded operator from $X$ to $Y$. We consider systems of the form
\begin{equation} \label{eq:system:obs:intro}
\begin{aligned}
  \dot{x}(t) & = -Ax(t), \quad & &t\in (0,T] ,\quad x(0)  = x_0 \in X, \\
  y(t)  &= Cx(t), \quad   & & t\in [0,T] ,
  \end{aligned}
\end{equation}
where $T>0$ can be thought of as a final time for the system. One interpretation of the second equation in \eqref{eq:system:obs:intro} is that we cannot measure the state $x(t)$ at time $t$ directly, but just some $y(t) = C x(t)$ from the range of $C$. 
The focus of this paper relates to the question whether the system~\eqref{eq:system:obs:intro} satisfies a final state observability estimate in $L_r ((0,T);Y)$ with $r \in [1,\infty]$, that is, there exists $C_{\mathrm{obs}} > 0$ such that for all $x_0 \in X$ we have 
$
\lVert x (T) \rVert_X \leq C_{\mathrm{obs}} \lVert y \rVert_{L_r ((0,T) ; Y)} 
$.
A final state observability estimate thus allows one to recover information on the final state $x(T)$ from suitable measurements $y(t)$ for $t \in (0,T)$. 
\par
The most studied example of the system \eqref{eq:system:obs:intro} is the heat equation with heat generation term in $L_2 (\Omega)$ with $\Omega \subset \R^d$ open and some nonempty observability set, i.e.\ $A = \Delta - V$ is a self-adjoint Schr\"odinger operator in $L_2 (\Omega)$ with bounded potential $V$, and $C = \1_\thickset$ is the projection onto some non-empty measurable set $\thickset \subset \Omega$. For bounded domains $\Omega \subset \R^d$ the observability problem for the heat equation is well understood since the seminal works by Lebeau and Robbiano \cite{LebeauR-95} and Fursikov and Imanuvilov \cite{FursikovI-96}.
For unbounded domains this problem has been studied, e.g., in \cite{Miller-05,Miller-05b,GonzalezT-07,Barbu-14}. While for bounded domains it is sufficient that $\thickset$ is open and nonempty, or even measurable with positive Lebesgue measure \cite{ApraizEWZ-14,EscauriazaMZ-15}, this is of course not true for unbounded domains. On unbounded domains a sufficient geometric condition for observability is given in \cite{LeRousseauM-16}. In addition to that, the papers \cite{EgidiV-18,WangWZZ-19} show that the free heat equation in $L_2 (\R^d)$ satisfies a final state observability estimate if and only if $\thickset$ is a thick set. 
\par
Since the observability constant $C_{\mathrm{obs}}$ can be interpreted (by duality) as the cost for the corresponding null-controllability problem, the problem of obtaining explicit bounds on $C_{\mathrm{obs}}$ attracted particular attention in the literature. The (optimal) dependence of $C_{\mathrm{obs}}$ on the model parameter $T$ is investigated in \cite{Guichal-85,FernandezZ-00,Miller-04b,Phung-04,Miller-06,Miller-06b,TenenbaumT-07,Miller-10,LeRousseauL-12,BeauchardP-18}, while \cite{Miller-04b, TenenbaumT-11, ErvedozaZ-11, NakicTTV-18, EgidiV-18, Phung-18,Egidi-18-arxiv,LaurentL-18-arxiv,NakicTTV-18b-arxiv} also study the dependence on the geometry of the control set $\thickset$.
Moreover, \cite{Guichal-85,Miller-06, TenenbaumT-07,Lissy-12,Lissy-15,DardeE-19} concern one-dimensional problems and boundary control.
\par
One possible approach to show an observability estimate has been described in the papers \cite{LebeauR-95,LebeauZ-98,JerisonL-99}, that is, to prove a quantitative uncertainty relation for spectral projectors. This is an inequality of the type
\begin{equation*}
 \forall \lambda > 0 \ \forall \psi \in L_2 (\Omega) \colon \quad 
 \lVert P (\lambda) \psi \rVert_{L_2 (\Omega)}
 \leq
 d_0 \euler^{d_1 \lambda^{\gamma}}
 \lVert \1_\thickset P (\lambda) \psi \rVert_{L_2 (\thickset)} ,
\end{equation*}
where $\gamma \in (0,1)$, $d_0,d_1 > 0$, and where $P (\lambda)$ denotes the projector to the spectral subspace of $-\Delta + V$ below $\lambda$. Subsequently, this strategy is generalized to (contraction) semigroups in abstract Hilbert spaces with (possibly self-adjoint) generators $-A$, to name those which are closest related to our result; see \cite{Miller-10,TenenbaumT-11,WangZ-17,BeauchardP-18,NakicTTV-18b-arxiv}. In particular, the papers \cite{Miller-10,WangZ-17,BeauchardP-18} allow for the $P (\lambda)$ to be arbitrary projectors (onto semigroup invariant subspaces) by assuming additionally a so-called dissipation estimate, that is, a decay estimate of the semigroup on the orthogonal complement of the range of $P(\lambda)$. This can be rephrased in a scheme in which an uncertainty relation together with a dissipation estimate implies an observability estimate.
Since the constants appearing in the uncertainty relation (and the dissipation estimate) transfer into the observability constant $C_{\mathrm{obs}}$, it is important to achieve its dependence on $d_0$, $d_1$, and $\gamma$, on the set $\thickset$, and on the coefficients of the operator $A$ as explicitly as possible. 
Uncertainty relations with an explicit dependence on the geometry of $\thickset$ 
are provided by the Logvinenko--Sereda theorem for the free heat equation observed on thick sets \cite{LogvinenkoS-74,Kovrijkine-00,Kovrijkine-01,EgidiV-16-arxiv}. For Schr\"odinger operators such uncertainty relations have, for instance, been proven in \cite{NakicTTV-18,NakicTTV-18-arxiv} for a certain class of equidistributed observation sets and bounded potentials and in \cite{LebeauM-19-arxiv} for thick observation sets and analytic potentials.
\par
So far, the discussion has been restricted to Hilbert spaces only. However, a natural setup to ask for observability estimates is the context of Banach spaces and $C_0$-semigroups, since there are various applications of the above concepts in this situation.
In this paper, we extend (some of) the above-mentioned results to the Banach space setting. In particular, in Section~\ref{sec:abstract_obs_estimate} we show in the general framework of Banach spaces that an uncertainty relation together with a dissipation estimate implies that the system~\eqref{eq:system:obs:intro} satisfies a final state observability estimate. Our observability constant $C_{\mathrm{obs}}$ is given explicitly with respect to the parameters coming from the uncertainty relation and the dissipation estimate and, in addition, is sharp in the dependence on $T$.
Let us stress that, besides the fact that this result holds in its natural Banach space setting, our
approach unifies and generalizes the respective advantages from earlier results even in the context of Hilbert spaces; cf.\ Remark~\ref{rem:UR-OBS} for more details. In Section~\ref{sec:L_p(Rd)} we verify these sufficient conditions in $L_p$-spaces for a class of elliptic operators $A$ and observation operators $C = \1_\thickset$. This way we obtain an observability estimate with an explicit dependence on the coefficients of the elliptic operator $A$, the final time $T$, and the geometry of the thick set $\thickset$. Furthermore, we show that this result is sharp in the sense that the system~\eqref{eq:system:obs:intro} satisfies a final state observability estimate if and only if $\thickset$ is a thick set. 
Finally, in Section~\ref{sec:Control_Costs} we make use of the well-known relation between observability and null-controllability of the predual system to \eqref{eq:system:obs:intro} and investigate bounds on the corresponding control costs.
\section{Sufficient criteria for observability in Banach spaces}
\label{sec:abstract_obs_estimate}
For normed spaces $V$ and $W$ we denote by $\cL (V , W)$
the space of bounded linear operators from $V$ to $W$.
Let $X,Y$ be Banach spaces, $(S_t)_{t \geq 0}$ a $C_0$-semigroup on $X$, $-A$ the corresponding infinitesimal generator on $X$ with domain $\dom (-A)$, and $C \in \cL (X,Y)$. For $T>0$ we consider the system
\begin{equation} \label{eq:system:obs}
\begin{aligned}
  \dot{x}(t) & = -Ax(t), \quad & &t\in (0,T] ,\quad x(0)  = x_0 \in X, \\
  y(t)  &= Cx(t), \quad   & & t\in [0,T] .
  \end{aligned}
\end{equation}
The mild solution of \eqref{eq:system:obs} is given by 
\[
 x (t) = S_t x_0, \quad y(t) = C S_t x_0, \quad t\in [0,T] .
\]
In particular, if $x_0 \in \dom (A)$ we may differentiate $x (\cdot) = S_{(\cdot)} x_0$ to obtain \eqref{eq:system:obs}. Let $r \in [1,\infty]$. We say that the system \eqref{eq:system:obs} satisfies a \emph{final state observability estimate in $L_r ((0,T) ; Y)$} if there exists $C_{\mathrm{obs}} > 0$ such that for all $x_0 \in X$ we have $\lVert x (T) \rVert_X \leq C_{\mathrm{obs}} \lVert y \rVert_{L_r ((0,T) ; Y)}$ or, equivalently, if for all $x_0\in X$ we have
\begin{align*}
  \lVert S_T x_0 \rVert_X 
  & \leq C_{\mathrm{obs}} \left( \int_0^T \lVert C S_\tau x_0 \rVert_Y^r \drm \tau \right)^{1/r} &&\quad\text{if $1\leq r<\infty$ or} \\
  \lVert S_T x_0 \rVert_X & \leq C_{\mathrm{obs}} \operatorname*{ess\,sup}_{\tau \in[0, T]}\lVert C S_\tau x_0 \rVert_Y &&\quad\text{if $r=\infty$}.
\end{align*}
One motivation to study final state observability estimates is their relation to null-\-con\-trol\-lability of the predual system to \eqref{eq:system:obs} and its control cost. This is discussed in more detail in Section~\ref{sec:Control_Costs}.
\par
The following theorem provides sufficient conditions such that the system \eqref{eq:system:obs} satisfies a final state observability estimate.
\begin{theorem} \label{thm:spectral+diss-obs} Let $X$ and $Y$ be Banach spaces, $C\in \cL(X,Y)$, $(S_t)_{t\geq 0}$ be a $C_0$-semigroup on $X$, $M \geq 1$ and $\omega \in \R$ such that $\lVert S_t \rVert \leq M \euler^{\omega t}$ for all $t \geq 0$, $\lambda^* \geq 0$ and $(P_\lambda)_{\lambda>\lambda^*}$ be a family of bounded linear operators in $X$.
Assume further that there exist $d_0,d_1,\gamma_1 > 0$ such that
\begin{align} 
\forall x\in X \ \forall \lambda > \lambda^* &\colon \quad \lVert P_\lambda x \rVert_{ X } \le d_0 \euler^{d_1 \lambda^{\gamma_1}} \lVert C  P_\lambda x \rVert_{Y }
\label{eq:ass:uncertainty}
\end{align}
and that there exist $d_2\geq 1$, $d_3,\gamma_2,\gamma_3, T>0$ with $\gamma_1<\gamma_2$ such that
\begin{align}
\forall x\in X \ \forall \lambda > \lambda^* \ \forall t\in (0,T/2] &\colon \quad \lVert (\id-P_\lambda) S_t x \rVert_{X} \le d_2 \euler^{-d_3 \lambda^{\gamma_2} t^{\gamma_3}} \lVert x \rVert_{X}. \label{eq:ass:dissipation}
\end{align}
Then we have for all $r\in[1,\infty]$ and $x \in X$
\begin{align*}
 \lVert S_T x \rVert_{X} \leq C_{\mathrm{obs}}   \lVert CS_{(\cdot)}x \rVert_{L_r ((0,T);Y)}
 \quad\text{with}\quad
 C_{\mathrm{obs}} = \frac{C_1}{T^{1/r}} \exp \left(\frac{C_2}{T^{\frac{\gamma_1 \gamma_3}{\gamma_2 - \gamma_1}}} +   C_3 T\right), 
\end{align*}
where $T^{1/r} = 1$ if $r=\infty$, and
\begin{align*}
  C_1 
  &= (4 M d_0) \max \Bigl\{\left( (4d_2 M^2)   (d_0 \lVert C \rVert_{\cL (X,Y)}+1) \right)^{8/(\euler \ln 2)}, \euler^{4d_1\left(2\lambda^*\right)^{\gamma_1}}\Bigr\}, \\
 C_2 &= 4 \bigl(2^{\gamma_1} (2\cdot 4^{\gamma_3})^\frac{\gamma_1 \gamma_2}{\gamma_2-\gamma_1} d_1^{\gamma_2} / d_3^{\gamma_1} \bigr)^{\frac{1}{\gamma_2-\gamma_1}} , \\[1ex]
 C_3 & = \max\{\omega , 0\} \bigl(1 + 10 / (\euler \ln 2) \bigr).
\end{align*}
\end{theorem}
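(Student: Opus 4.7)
I would follow the classical Lebeau--Robbiano telescoping strategy, cast in a form that uses only the triangle inequality (so that it is insensitive to the absence of orthogonality) and that carries every constant explicitly. The two ingredients are (i) a one-interval interpolation inequality produced by combining the uncertainty relation \eqref{eq:ass:uncertainty} with the dissipation estimate \eqref{eq:ass:dissipation}, and (ii) a dyadic iteration along times $\tau_k\nearrow T$ with an adapted increasing sequence of spectral levels $\lambda_k$.

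\textbf{Interval inequality.} Fix $0\le a<b\le T$ with $\delta:=b-a\le T/2$, let $\lambda>\lambda^*$, and take $\tau\in[(a+b)/2,b]$. Decomposing $S_\tau x=P_\lambda S_\tau x+(\id-P_\lambda)S_\tau x$, applying \eqref{eq:ass:uncertainty} to $P_\lambda S_\tau x$, writing $\lVert CP_\lambda S_\tau x\rVert_Y\le\lVert CS_\tau x\rVert_Y+\lVert C\rVert\,\lVert(\id-P_\lambda)S_\tau x\rVert_X$, and bounding $(\id-P_\lambda)S_\tau x=(\id-P_\lambda)S_{\tau-a}(S_a x)$ via \eqref{eq:ass:dissipation} with $\tau-a\ge\delta/2\le T/2$, I obtain
\[
\lVert S_\tau x\rVert_X\le d_0\euler^{d_1\lambda^{\gamma_1}}\lVert CS_\tau x\rVert_Y+\bigl(1+d_0\lVert C\rVert\euler^{d_1\lambda^{\gamma_1}}\bigr)d_2\euler^{-d_3\lambda^{\gamma_2}(\delta/2)^{\gamma_3}}\lVert S_a x\rVert_X.
\]
Using $\lVert S_b x\rVert_X\le M\euler^{\max\{\omega,0\}(b-\tau)}\lVert S_\tau x\rVert_X$, averaging in $\tau$ over $[(a+b)/2,b]$, and applying Hölder's inequality with conjugate exponents $(r,r')$ to the observation term yields an estimate of the form
\[
\lVert S_b x\rVert_X\le A(\lambda)\delta^{-1/r}\lVert CS_{(\cdot)}x\rVert_{L_r((a,b);Y)}+B(\lambda,\delta)\lVert S_a x\rVert_X,
\]
with $A(\lambda)\le c_1 M\euler^{\max\{\omega,0\}T/2}d_0\euler^{d_1\lambda^{\gamma_1}}$ and $B(\lambda,\delta)\le c_2 M\euler^{\max\{\omega,0\}T/2}d_2(d_0\lVert C\rVert+1)\euler^{d_1\lambda^{\gamma_1}-d_3\lambda^{\gamma_2}(\delta/2)^{\gamma_3}}$ for absolute constants $c_1,c_2$.

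\textbf{Telescoping and constants.} Set $\tau_k:=T(1-2^{-k})$ so that $\delta_k=\tau_{k+1}-\tau_k=T2^{-k-1}$, and choose $\lambda_k$ so that $d_3\lambda_k^{\gamma_2}(\delta_k/2)^{\gamma_3}-2d_1\lambda_k^{\gamma_1}$ grows linearly in $k$; this forces $\lambda_k$ to be of order $(2^k/T)^{\gamma_3/(\gamma_2-\gamma_1)}$, with a truncation at $\lambda^*$ that is responsible for the $\euler^{4d_1(2\lambda^*)^{\gamma_1}}$ alternative appearing in $C_1$. Iterating the interval inequality on each $[\tau_k,\tau_{k+1}]$ and unrolling yields
\[
\lVert S_{\tau_{N+1}}x\rVert_X\le\sum_{k=0}^N A(\lambda_k)\delta_k^{-1/r}\prod_{j=k+1}^N B(\lambda_j,\delta_j)\,\lVert CS_{(\cdot)}x\rVert_{L_r((0,T);Y)}+\prod_{j=0}^N B(\lambda_j,\delta_j)\,\lVert x\rVert_X.
\]
With the above choice of $\lambda_k$ the factors $B(\lambda_j,\delta_j)$ decay super-geometrically, so $\prod_{j\ge0}B(\lambda_j,\delta_j)=0$; letting $N\to\infty$ and invoking $S_{\tau_N}x\to S_Tx$ by strong continuity produces the claimed observability bound. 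The exponent $\gamma_1\gamma_3/(\gamma_2-\gamma_1)$ in $C_2$ then emerges from $\lambda_k^{\gamma_1}\asymp(2^k/T)^{\gamma_1\gamma_3/(\gamma_2-\gamma_1)}$ after summing the geometric tail, while $C_3T$ collects the repeated $M\euler^{\max\{\omega,0\}T/2}$-factors across the effective dyadic steps. The conceptually clear part is this scheme; the main obstacle is the arithmetic book-keeping required to extract the precise numerical constants $C_1,C_2,C_3$ of the statement, and in particular to separate cleanly the $\lambda_0>\lambda^*$ and $\lambda_0=\lambda^*$ regimes that produce the maximum in $C_1$.
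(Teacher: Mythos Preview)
Your plan is essentially the Lebeau--Robbiano telescoping that the paper also uses, and the scheme you outline would work. There are, however, a few organizational differences worth noting. First, the paper reduces at the outset to the case $r=1$ via H\"older (so that the factor $T^{-1/r}$ drops out automatically at the end), whereas you apply H\"older on each dyadic subinterval; both are fine, but the paper's reduction avoids having to track the extra $\delta_k^{-1/r}$ factors through the sum. Second, the paper iterates \emph{backward} in time via a $4$-adic recursion $F(T)\le D_1(T,\lambda)+D_2(T,\lambda)F(T/4)$ (the factor $4$ arising because the dissipation estimate is invoked twice, once for $F_\lambda^\perp$ and once for $G_\lambda^\perp$, each halving the time), while you iterate \emph{forward} along $\tau_k=T(1-2^{-k})$; again both work, but your forward scheme requires the strong-continuity argument at $\tau_k\nearrow T$ that you mention, whereas the paper's backward scheme terminates directly at $T$. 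Third, the paper makes a very specific choice $\lambda_k=\nu\alpha^k$ with explicit $\alpha,\nu$ (distinguishing $T\le T_0$ from $T>T_0$) to extract the precise constants $C_1,C_2,C_3$; this is exactly the ``arithmetic book-keeping'' you flag as the main obstacle, and indeed it occupies most of the paper's proof.

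One small inaccuracy: you say that $C_3T$ ``collects the repeated $M\euler^{\omega_+T/2}$-factors across the dyadic steps''. The exponential factors indeed telescope to $\euler^{c\,\omega_+T}$, but the powers of $M$ (one per step) cannot be absorbed into $C_3T$; they must instead be swallowed by the super-geometric decay of the $B(\lambda_j,\delta_j)$, exactly as the factor $K_1^{N+1}$ is handled in the paper. This is harmless for the argument but would need to be said correctly when you fill in the constants.
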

\begin{remark} \label{rem:UR-OBS} 
\begin{enumerate}[label=(\alph*),wide,labelindent=\parindent,parsep=0cm]
 \item Assumption~\eqref{eq:ass:uncertainty} is called \emph{uncertainty relation}, since a state $P_\lambda x \not = 0$ in the range of $P_\lambda$ cannot be in the kernel of $C$. In particular, if $X = Y$ is a Hilbert space and $P_\lambda$ and $C$ are orthogonal projections, assumption~\eqref{eq:ass:uncertainty} can be rewritten as 
\begin{equation} \label{eq:uncertainty_hilbertspace}
 P_\lambda \leq d_0 \euler^{d_1 \lambda^{\gamma_1}} P_\lambda C P_\lambda ,
\end{equation}
where the inequality is understood in the quadratic form sense. If $X=Y = L_2 (\Omega)$ with $\Omega\subset \R^d$ open, $A$ is a Schr\"odinger operator, $C = \1_\thickset\from X\to Y$ is the restriction operator (i.e.\ the multiplication operator with $\1_\thickset$) on some measurable set $\thickset \subset \Omega$, and if $P_\lambda$ is the spectral projector of a self-adjoint operator onto the interval $(-\infty , \lambda]$, then the spectral projector corresponds to a restriction in momentum-space and enforces delocalization in direct space, i.e., an uncertainty relation. Inequality \eqref{eq:uncertainty_hilbertspace} is sometimes also called \emph{gain of positive definiteness}, since the restriction $P_\lambda C P_\lambda$ of $C$ is strictly
positive on the subspace $\ran P_\lambda$. In control theory inequalities of the type \eqref{eq:ass:uncertainty} are often called \emph{spectral inequality}. We omit this notation, since the operators $P_\lambda$ are in our setting not necessarily spectral projectors of some self-adjoint operator in a Hilbert space.
\par
Assumption~\eqref{eq:ass:dissipation} is called \emph{dissipation estimate}, as it assumes an exponential decay of $(\id - P_\lambda) S_t$ with respect to $\lambda$ and $t$. In particular, it implies that $P_\lambda\to \id$ strongly as $\lambda\to \infty$.
\item\label{rem:UR-OBS:2} The dependence of $C_{\mathrm{obs}}$ on $T$ is optimal for large and small $T$. In \cite{Seidman-84} Seidman showed for one-dimensional controlled heat systems that $C_{\mathrm{obs}}$ blows up at most exponentially for small $T$. This result was extended to arbitrary dimension by Fursikov and Imanuvilov in \cite{FursikovI-96}. That the exponential blow-up has to occur for small $T$ was first shown by G\"uichal \cite{Guichal-85} for one-dimensional systems and by Miller \cite{Miller-04} in arbitrary dimension. It is folklore that in the large time regime, the decay rate $T^{-1/r}$ is optimal; for a proof see, e.g., \cite[Theorem~2.13]{NakicTTV-18b-arxiv}.
\item Let us discuss the novel aspects of Theorem~\ref{thm:spectral+diss-obs} compared to earlier results in the literature. 
 We restrict our discussion to the case where $X$ and $Y$ are Hilbert spaces and $r=2$, since to the best of our knowledge, sufficient conditions for observability in Banach spaces as in Theorem~\ref{thm:spectral+diss-obs} have not been obtained before. 
\par
 That uncertainty relations imply observability estimates was first shown in the seminal papers \cite{LebeauR-95,LebeauZ-98,JerisonL-99}. Subsequently, there is a huge amount of literature concerning abstract theorems which turn uncertainty relations into observability estimates in Hilbert spaces; to name a few, see \cite{Miller-10,TenenbaumT-11,WangZ-17,BeauchardP-18,NakicTTV-18b-arxiv}. The paper \cite{Miller-10} considered general $C_0$-semigroups and the operators $(P_\lambda)_{\lambda > 0}$ as projections onto a nondecreasing family of semigroup invariant subspaces. The obtained observability constant $C_{\mathrm{obs}}$ is of the form $C \exp (C / T^{\gamma_1 \gamma_3 / (\gamma_2 - \gamma_1)})$ and hence misses the factor $T^{-1/2}$. A similar result has been obtained in \cite{BeauchardP-18} for contraction semigroups and orthogonal projections $(P_\lambda)_{\lambda > 0}$ onto semigroup invariant subspaces. The papers \cite{TenenbaumT-11,NakicTTV-18b-arxiv} considered nonnegative and self-adjoint operators $A$, and the operators $(P_\lambda)_{\lambda > 0}$ are assumed to be spectral projections of $A$ onto the interval $[0,\lambda)$. In this setting, the dissipation estimate is automatically satisfied with $\gamma_2 = \gamma_3 = 1$. While both papers obtain the ``optimal'' bound $C T^{-1/2} \exp (C / T^{\gamma_1/ (1 - \gamma_1)})$ (including the factor $T^{-1/2}$), the paper \cite{TenenbaumT-11} assumed additionally that $A$ has purely discrete spectrum with an orthogonal basis of eigenvectors. Moreover, \cite{NakicTTV-18b-arxiv} slightly improved the dependence of $C_{\mathrm{obs}}$ on the parameters $d_0$ and $d_1$ which was essential for their application to certain homogenization regimes. Let us emphasize that our result recovers this dependence on $d_0$ and $d_1$ as well and hence allows also for homogenization.
\par
 To conclude, our result extends the earlier mentioned results into three directions.
 \begin{enumerate}
  \item[(1)] We allow for an arbitrary family $(P_\lambda)_{\lambda > \lambda^*}$ of bounded linear operators, and obtain at the same time the factor $T^{-1/2}$ in $C_{\mathrm{obs}}$. In particular, we do not require that $P_\lambda$ is an orthogonal projection. 
  \item[(2)] We allow for general $C_0$-semigroups, possibly with exponential growth. We do not require contraction (or quasi-contraction, or bounded) semigroups.
 \end{enumerate}
As in \cite{WangZ-17}, our result combines the respective advantages from earlier results in Hilbert space setting, e.g., the factor $T^{-1/2}$ in $C_{\mathrm{obs}}$, general $C_0$-semigroups, and arbitrary family $(P_\lambda)_{\lambda > \lambda^*}$ of bounded linear operators at the same time.
 \begin{enumerate}
  \item[(3)] We consider Banach spaces $X$ and $Y$ instead of Hilbert spaces and $r \in [1,\infty]$ instead of $r = 2$.
 \end{enumerate}
 Indeed, since the theory of strongly continuous semigroups essentially is a Banach space theory, our Theorem~\ref{thm:spectral+diss-obs} now formulates the link from uncertainty relations and dissipation estimates to observability estimates in its natural setup. Let us stress that, in contrast to the above-mentioned references, we have no spectral calculus in the general framework of Banach spaces.
\item Suppose we have a discrete sequence $(P_k)_{k\in \N}$ of bounded linear operators in $X$ which satisfies the following discrete version of conditions \eqref{eq:ass:uncertainty} and \eqref{eq:ass:dissipation}:
	\begin{equation*} 
\forall x\in X \ \forall k \in \N \colon \quad \lVert P_k x \rVert_{ X } \le \tilde{d}_0 {\euler}^{\tilde{d}_1 k^{\gamma_1}} \lVert C  P_k x \rVert_{Y }
\end{equation*}
and
\begin{equation*}
\forall x\in X \ \forall k \in \N \ \forall t\in (0,T/2] \colon \quad \lVert (1-P_k) S_t x \rVert_{X} \le \tilde{d}_2 {\euler}^{-\tilde{d}_3 k^{\gamma_2} t^{\gamma_3}} \lVert x \rVert_{X}
\end{equation*}
for constants $\tilde{d}_0, \tilde{d}_1, \tilde{d}_3 >0$ and $\tilde{d}_2\geq 1$, as in \cite{BeauchardP-18}. Then we can apply Theorem~\ref{thm:spectral+diss-obs} in the following way. Let $(P_\lambda)_{\lambda>0}$ be defined by $P_\lambda = P_k$ for $\lambda \in (k-1,k]$, $k\in \N$. Then $(P_\lambda)_{\lambda>0}$ fulfils the assumptions \eqref{eq:ass:uncertainty} and \eqref{eq:ass:dissipation} of Theorem~\ref{thm:spectral+diss-obs} with
\begin{equation*}
d_0 := \tilde{d}_0\euler^{\tilde{d}_1}, \quad d_1 := 2^{\gamma_1} \tilde{d}_1, \quad d_2 := \tilde{d}_2 \quad  d_3 := \tilde{d}_3, \quad \text{and} \quad \lambda^* = 0.
\end{equation*}
\item It is possible to extend the statement of Theorem~\ref{thm:spectral+diss-obs} to the case of time-dependent observation operators $C\from [0,T]\to \cL(X,Y)$, as long as $C$ is measurable (in a suitable sense) and essentially bounded. We refer to \cite[Theorem~2.11]{NakicTTV-18b-arxiv} for a similar extension in Hilbert spaces.
\item
It is also possible to prove an interpolation inequality as in \cite[Theorem 6]{ApraizEWZ-14} for our abstract context. This can then be used to obtain a final state observability estimate by only taking into account the observation function on a measurable subset of the time interval $[0,T]$; cf.\ \cite[Theorem 1.1]{PhungW-13}.
\end{enumerate}
\end{remark}
\begin{proof}[Proof of Theorem~\ref{thm:spectral+diss-obs}]
 Assume we have shown the statement of the theorem in the case $r=1$, i.e., for all $x \in X$ we have
 \[
 \lVert S_T x \rVert_{X} \leq C_{\mathrm{obs}}   \lVert CS_{(\cdot)}x \rVert_{L_1 ((0,T);Y)}, \quad\text{where}\quad
 C_{\mathrm{obs}}  = \frac{C_1}{T} \exp \left(\frac{C_2}{T^{\frac{\gamma_1 \gamma_3}{\gamma_2 - \gamma_1}}} +   C_3 T\right) .
 \]
 Then, by H\"older's inequality we obtain for all $r \in [1,\infty]$ and all $x \in X$
 \[
  \lVert S_T x \rVert_{X} \leq C_{\mathrm{obs}} T^{1/r'} \lVert CS_{(\cdot)}x\rVert_{L_{r}((0,T);Y)} ,
 \]
 where $r' \in [1,\infty]$ is such  that $1/r + 1/r' = 1$. Since $T^{-1} T^{1/r'} = T^{-1/r}$, the statement of the theorem follows. Thus, it is sufficient to prove the theorem in the case $r = 1$.
 \par
 For the first part of the proof, we adapt the strategy in \cite{TenenbaumT-11, NakicTTV-18b-arxiv} with a slight modification in order to deal with general $P_\lambda$'s instead of spectral projectors.
Fix $x \in X$ arbitrary, and introduce for $t > 0$ and $\lambda>\lambda^*$ the notation
 \begin{align*}
   F (t) &= \bigl\lVert S_t x \bigr\rVert_X ,  
 & F_\lambda (t) &= \bigl\lVert P_\lambda S_t x \bigr\rVert_X ,
 & F_\lambda^\perp (t) &= \bigl\lVert (\id - P_\lambda) S_t x \bigr\rVert_X , \\
   G (t) &= \bigl\lVert C S_t x \bigr\rVert_Y ,
 & G_\lambda (t) &= \bigl\lVert C P_\lambda S_t x \bigr\rVert_Y ,
 & G_\lambda^\perp (t) &=\bigl\lVert C (\id - P_\lambda) S_t  x \bigr\rVert_Y .
 \end{align*}
 Then for $0\leq \tau\leq t$ we obtain
 \[
 F(t) 
 = 
 \lVert S_t x \rVert_X 
 = 
 \lVert S_{t-\tau}S_\tau x \rVert_X 
 \leq 
 M\euler^{\omega_+ t} \lVert S_\tau x \rVert_X
 = 
 M \euler^{\omega_+ t} F(\tau) ,
 \]
 where $\omega_+ = \max\{\omega , 0\}$. Integrating this inequality, we obtain
 \[
 F(t) \leq M\euler^{\omega_+ t} \frac{2}{t} \int_{t/2}^t F(\tau)\drm \tau.
 \] 
 We now use the uncertainty relation \eqref{eq:ass:uncertainty} to obtain for all $t > 0$ and $\lambda>\lambda^*$
 \begin{align*}
  F (t) 
  \leq 
  M\euler^{\omega_+ t}\frac{2}{t} \int_{t/2}^t F (\tau) \drm \tau 
  &\leq 
  M\euler^{\omega_+ t}\frac{2}{t} \int_{t/2}^t \left( F_\lambda (\tau) + F_\lambda^\perp (\tau) \right) \drm \tau \\
  &\leq
  M\euler^{\omega_+ t}\frac{2}{t} \int_{t/2}^t \left( d_0 \euler^{d_1 \lambda^{\gamma_1}} G_\lambda (\tau) + F_\lambda^\perp (\tau) \right) \drm \tau .
 \end{align*}
 By the semigroup property and the dissipation estimate \eqref{eq:ass:dissipation} we have for all $\tau \in (0,T]$ the estimate
 \begin{equation} \label{eq:dissi-trick}
 F_\lambda^\perp (\tau) = \lVert (\id - P_\lambda) S_{\tau / 2} S_{\tau / 2} x \rVert_X \leq d_2 \euler^{-d_3 \lambda^{\gamma_2} (\tau/2)^{\gamma_3}} F (\tau / 2). 
 \end{equation}
Since $F(\tau/2) \leq M\euler^{\omega_+ t / 4} F(t/4)$ for $t > 0$ and $\tau \in [t/2 , t]$, we obtain for all $t \in (0,T]$ and $\lambda>\lambda^*$
 \begin{equation} \label{eq:1}
  F (t) 
\le \frac{2 M\euler^{\omega_+ t} d_0 \euler^{d_1\lambda^{\gamma_1}}}{t} \int_{t/2}^t G_\lambda(\tau) \drm \tau + d_2 M^{2}\euler^{5 \omega_+ t / 4}  \euler^{-  d_3 \lambda^{\gamma_2} (t/4)^{\gamma_3}} F(t / 4) .
 \end{equation}
 Using $G_\lambda (\tau) \leq G (\tau) + G_\lambda^\perp (\tau) \leq  G (\tau) + \lVert C \rVert_{\cL (X,Y)} F_\lambda^\perp (\tau)$ and \eqref{eq:dissi-trick} again, we obtain for all $t \in (0,T]$ and $\lambda>\lambda^*$
 \begin{equation} \label{eq:2}
  \int_{t/2}^t G_\lambda(\tau) \drm \tau
  \leq 
  \int_{t/2}^t G(\tau) \drm \tau + \lVert C \rVert_{\cL (X,Y)} d_2 \euler^{-d_3 \lambda^{\gamma_2} (t / 4)^{\gamma_3} } \int_{t/2}^t  F (\tau / 2) \drm \tau .
 \end{equation}
 Since $F (\tau / 2) \leq M\euler^{\omega_+ t/4} F (t / 4)$ for $t > 0$ and $\tau \in [t/2 , t]$ and $1 \leq \euler^{d_1\lambda^{\gamma_1}}$, we conclude from \eqref{eq:1} and \eqref{eq:2} for all $t \in (0,T]$ and $\lambda>\lambda^*$
 \begin{align*} 
F (t) 
  & \leq \frac{2M d_0 \euler^{d_1\lambda^{\gamma_1}}}{t \euler^{-\omega_+ T}} \int_{t/2}^t G(\tau) \drm \tau +  \frac{d_2 M^{2} \euler^{5\omega_+ T / 4} \euler^{d_1\lambda^{\gamma_1}}}{\euler^{d_3 \lambda^{\gamma_2} (t/4)^{\gamma_3}}}\left(  d_0 \lVert C \rVert_{\cL (X,Y)} +1 \right) F(t / 4) .
 \end{align*}
With the short hand notation
\[
 D_1 (t,\lambda) = \frac{2M\euler^{\omega_+ T} d_0 \euler^{d_1\lambda^{\gamma_1}}}{t} \int_{t/2}^t G(\tau) \drm \tau,
 \quad
 D_2 (t,\lambda) = K_1 \euler^{d_1\lambda^{\gamma_1}- d_3 \lambda^{\gamma_2} (t/4)^{\gamma_3}} ,
\]
where $K_1 = (d_0 \lVert C \rVert_{\cL (X,Y)}+1) d_2 M^{2}\euler^{5\omega_+ T / 4}$, this can be rewritten as
\begin{equation} \label{eq:before_iterating}
 F (t) \le D_1 (t,\lambda) + D_2 (t,\lambda) F (t/4) .
\end{equation}
This inequality can be iterated. Let $(\lambda_k)_{k \in \N_0}$ be a sequence with $\lambda_k > \lambda^*$ for $k \in \N_0$.
First we apply inequality~\eqref{eq:before_iterating} with $t = T$ and $\lambda = \lambda_0$.
The term $F (4^{-1}T)$ on the right-hand side is then estimated by inequality~\eqref{eq:before_iterating} with $t = 4^{-1} T$ and $\lambda = \lambda_1$. This way, we obtain after two steps
\begin{align*}
 F (T) &\leq  D_1 (T , \lambda_0) + D_2 (T , \lambda_0) \left(  D_1 (4^{-1} T , \lambda_1) + D_2 (4^{-1}T , \lambda_1) F (4^{-2}T) \right) \\
 & =  D_1 (T , \lambda_0) + D_1 (4^{-1}T , \lambda_1) D_2 (T , \lambda_0) + D_2 (T , \lambda_0) D_2 (4^{-1}T , \lambda_1) F (4^{-2}T ) .
\end{align*}
After $N + 1$ steps of this type we obtain
\begin{multline} \label{eq:after_iteration}
 F (T) \leq D_1 (T , \lambda_0) + \sum_{k=1}^N D_1 (4^{-k}T , \lambda_k) \prod_{l = 0}^{k-1} D_2 (4^{-l}T , \lambda_l) \\
 + F (4^{-N-1} T) \prod_{k=0}^N D_2 (4^{-k}T, \lambda_k) .
\end{multline}
We now choose the sequence $(\lambda_k)_{k\in\N_0}$ given by $\lambda_k = \nu \alpha^k$ with
\begin{equation*}
\alpha = \begin{cases} \alpha_0 \quad &\text{if } T\le T_0 ,
\\ 
\displaystyle \alpha_0 \left(\frac{T}{T_0}\right)^\frac{\gamma_3}{\gamma_2} &\text{if } T > T_0 ,
\end{cases}
\quad \text{and}\quad
\nu = 
\begin{cases} 
\displaystyle \nu_0 \left(\frac{T_0}{T}\right)^\frac{\gamma_3}{\gamma_2-\gamma_1} &\text{if}\ T\le T_0 ,
\\  \nu_0 &\text{if} \ T > T_0,
\end{cases}
\end{equation*}
where
\begin{equation*}
\alpha_0 := (2\cdot 4^{\gamma_3})^\frac{1}{\gamma_2-\gamma_1}, \quad 
\nu_0    := \max\left\{\left(\frac{2\ln (4K_1)}{\euler \ln (2) d_1} \right)^{\frac{1}{\gamma_1}}, 2\lambda^*\right\}, 
\quad 
T_0      := \left(\frac{2 d_1 \alpha_0^{\gamma_2}}{d_3\nu_0^{\gamma_2-\gamma_1}} \right)^\frac{1}{\gamma_3}.
\end{equation*}
With this notation we have (in both cases $T \leq T_0$ and $T > T_0$) the equality
\begin{equation} \label{eq:Prop_av}
d_3 T^{\gamma_3} \nu^{\gamma_2-\gamma_1} = 2 d_1 \alpha^{\gamma_2} ,
\end{equation}
which we will use frequently in the following. 
Moreover, the choice of $\alpha$ and $\nu$ ensures that the constants 
\begin{equation*}
K_2 := d_3 \Big(\frac{T}{4}\Big)^{\gamma_3} \nu^{\gamma_2} - d_1 \nu^{\gamma_1} \quad \text{and} \quad 
K_3 :=  \frac{K_2}{\alpha^{\gamma_2} / 4^{\gamma_3} - 1} - d_1 \nu^{\gamma_1} 
\end{equation*}
are positive. Indeed, using \eqref{eq:Prop_av} we find
\[
 K_3 = \frac{d_3 (T/4)^{\gamma_3} \nu^{\gamma_2} - \alpha^{\gamma_2} d_1 \nu^{\gamma_1} / 4^{\gamma_3}}{\alpha^{\gamma_2} / 4^{\gamma_3} - 1} 
 = \nu^{\gamma_1}\frac{d_1 \alpha^{\gamma_2} / 4^{\gamma_3}}{\alpha^{\gamma_2} / 4^{\gamma_3} - 1} .
\]
Since $\alpha^{\gamma_2} > 2\cdot 4^{\gamma_3}$, we conclude that $K_3$ is positive. Note that $K_2 > K_3$; hence $K_2$ is positive as well. 
Let us now show that the right-hand side in \eqref{eq:after_iteration} converges for $N \to \infty$. Since $\alpha^{\gamma_1} \le \alpha^{\gamma_2}/4^{\gamma_3}$, we have
\begin{align}
\prod_{k=0}^N D_2 (4^{-k}T, \lambda_k ) &= \prod_{k=0}^N K_1 \exp\left(d_1 \nu^{\gamma_1} \alpha^{{\gamma_1}k} - d_3 \left(\frac{T}{4}\right)^{\gamma_3} \nu^{\gamma_2} \left(\frac{\alpha^{\gamma_2}}{4^{\gamma_3}}\right)^k\right) \nonumber \\
&\le K_1^{N+1} \prod_{k=0}^N \exp\left(- K_2 \left(\frac{\alpha^{\gamma_2}}{4^{\gamma_3}}\right)^k\right). \label{eq:prod}
\end{align}
Since $K_1,K_2 > 0$ and $\alpha^{\gamma_2}/4^{\gamma_3} > 1$, this tends to zero as $N$ tends to infinity. Moreover, using \eqref{eq:prod} and $\alpha^{\gamma_1} \le \alpha^{\gamma_2}/4^{\gamma_3}$, we infer that the middle term of the right-hand side of \eqref{eq:after_iteration} satisfies
\begin{align*}
\sum_{k=1}^N & D_1(4^{-k}T, \lambda_k)  \prod_{\ell=0}^{k-1} D_2(4^{-\ell}T, \lambda_l) \\
&\le 2M\euler^{\omega_+ T} d_0 \frac{1}{T} \int_0^T G(\tau) \drm \tau \sum_{k=1}^N   (4K_1)^k \exp\left(-K_2 \frac{(\alpha^{\gamma_2} / 4^{\gamma_3} )^k - 1}{\alpha^{\gamma_2} / 4^{\gamma_3} - 1} + d_1\nu^{\gamma_1} \alpha^{{\gamma_1}k}\right) \\
&\le 2M\euler^{\omega_+ T} d_0 \frac{1}{T} \int_0^T G(\tau) \drm \tau  \exp\left(\frac{K_2}{\alpha^{\gamma_2} / 4^{\gamma_3} - 1}\right) \sum_{k=1}^N (4K_1)^k \exp\left(-K_3 \left(\frac{\alpha^{\gamma_2}}{4^{\gamma_3}}\right)^k\right).
\end{align*}
Since $K_3 > 0$, the right-hand side converges as $N$ tends to infinity, and we obtain from \eqref{eq:after_iteration} that
\begin{equation*}
\lVert S_Tx \rVert_X \le \tilde C_{\mathrm{obs}} \int_0^T \lVert CS_tx \rVert_Y \drm t,
\end{equation*}
where
\begin{equation*}
\tilde C_{\mathrm{obs}} 
= 
\frac{2M d_0}{T \euler^{-\omega_+ T}} \left( 
\euler^{d_1\nu^{\gamma_1}} + 
\exp\left(\frac{K_2}{\alpha^{\gamma_2} / 4^{\gamma_3} - 1}\right) \sum_{k=1}^\infty (4K_1)^k \exp\Bigg(-K_3 \left(\frac{\alpha^{\gamma_2}}{4^{\gamma_3}}\right)^k\Bigg) \right).
\end{equation*}
It remains to show the upper bound $\tilde C_{\mathrm{obs}} \leq C_{\mathrm{obs}}$ with $C_{\mathrm{obs}}$ as in the theorem. To this end, we note that for all $A>1$ and $B>0$ we have
\begin{equation*}
\sum_{k=1}^\infty A^k \euler^{-B2^k} \le \sup_{x\geq 1} A^x \euler^{-\frac{B}{2}2^x} \sum_{k=1}^\infty \euler^{-\frac{B}{2}2^k} = \left( \frac{2 \ln (A)}{B \euler \ln (2)} \right)^{\frac{\ln (A)}{\ln (2)}} \sum_{k=1}^\infty \euler^{-\frac{B}{2}2^k} ,
\end{equation*}
where the last identity follows from elementary calculus. Using $2^k \geq 2k$ and $\euler^B-1 \geq B$, we further estimate
\begin{equation*}
\sum_{k=1}^\infty \euler^{-\frac{B}{2}2^k} \le \sum_{k=1}^\infty \euler^{-kB} = \frac{\euler^{-B}}{1-\euler^{-B}} \le \frac{1}{B}.
\end{equation*}
Hence, we find
\begin{equation}
\sum_{k=1}^\infty A^k \euler^{-B2^k} \le \left( \frac{2 \ln (A)}{B \euler \ln (2)} \right)^{\frac{\ln (A)}{\ln (2)}}\frac{1}{B} .
\label{eq:sum_estimate}
\end{equation}
We now apply inequality (\ref{eq:sum_estimate}) with $A=4K_1$ and $B=K_3$ and obtain by using $\alpha^{\gamma_2} / 4^{\gamma_3} \geq 2$
\begin{equation*}
\tilde C_{\mathrm{obs}} 
\leq 
\frac{2M \euler^{\omega_+ T}d_0 }{T} \left(
\euler^{d_1\nu^{\gamma_1}} +  \exp\left(\frac{K_2}{\alpha^{\gamma_2} / 4^{\gamma_3} - 1}\right) 
\left( \frac{2 \ln (4K_1)}{K_3 \euler \ln (2)} \right)^{\frac{\ln (4K_1)}{\ln (2)}} \frac{1}{K_3} \right) .
\end{equation*}
For $K_3$ we have the lower bound
\begin{equation} \label{eq:K3}
 K_3 = \nu^{\gamma_1}\frac{d_1 \alpha^{\gamma_2} / 4^{\gamma_3}}{\alpha^{\gamma_2} / 4^{\gamma_3} - 1} 
 \geq 
 \nu^{\gamma_1} d_1 \geq \nu_0^{\gamma_1} d_1 \geq \frac{2 \ln (4K_1)}{\euler \ln (2)} .
\end{equation}
Since $M\euler^{\omega_+ T},d_2 \geq 1$ we have $K_1 \geq 1$; hence $\ln (4 K_1) \geq \ln 4$ and $K_3 > 1$. From this, inequality~\eqref{eq:K3}, and $d_1\nu^{\gamma_1} \le d_1 \nu^{\gamma_1} + K_3 = K_2 / (\alpha^{\gamma_2}/4^{\gamma_3} - 1)$, we conclude
 \begin{align} \label{eq:almostCobs}
\tilde C_{\mathrm{obs}}
&\leq 
\frac{2 M d_0}{T \euler^{-\omega_+ T}} \left(
 \euler^{d_1\nu^{\gamma_1}} + \exp\left(\frac{K_2}{\alpha^{\gamma_2} / 4^{\gamma_3} - 1}\right) \right)
 \leq 
\frac{4M d_0}{T\euler^{-\omega_+ T}} \exp\left(\frac{K_2}{\alpha^{\gamma_2} / 4^{\gamma_3} - 1}\right)  
 .
\end{align}
For the constant $K_2$ we calculate, using \eqref{eq:Prop_av} and $\alpha^{\gamma_2} / 4^{\gamma_3} - 1 \geq (1/2) \alpha^{\gamma_2} / 4^{\gamma_3}$,
\begin{align*}
\frac{K_2}{\alpha^{\gamma_2} / 4^{\gamma_3} - 1} & 
= \frac{d_3 (T / 4 )^{\gamma_3} \nu^{\gamma_2} - d_1 \nu^{\gamma_1}}{\alpha^{\gamma_2} / 4^{\gamma_3} - 1}
\leq \frac{d_3 (T/4)^{\gamma_3} \nu^{\gamma_2}}{\alpha^{\gamma_2} / 4^{\gamma_3} - 1} 
= \nu^{\gamma_1}\frac{2d_1 \alpha^{\gamma_2} / 4^{\gamma_3}}{\alpha^{\gamma_2} / 4^{\gamma_3} - 1}
\leq 4 d_1 \nu^{\gamma_1} .
\end{align*}
By our choice of $\nu$ we have 
\begin{align*}
\nu^{\gamma_1} = 
\begin{cases} 
\displaystyle \left( 2 d_1 \alpha_0^{\gamma_2} d_3^{-1} \right)^{{\gamma_1}/({\gamma_2}-{\gamma_1})} \left( \frac{1}{T} \right)^{{\gamma_1}{\gamma_3}/({\gamma_2}-{\gamma_1})} &\text{if} \ T\le T_0 , \\[1ex]
\displaystyle \max \left\{\frac{2 \ln (4K_1)}{\euler \ln (2)d_1}, \left(2\lambda^*\right)^{\gamma_1}\right\}  &\text{if } T > T_0 ,
 \end{cases}
\end{align*}
and hence
\begin{equation}\label{eq:K2}
\frac{K_2}{\alpha^{\gamma_2} / 4^{\gamma_3} - 1} \leq 
4 \left(\frac{2^{\gamma_1} \alpha_0^{{\gamma_1}{\gamma_2}} d_1^{\gamma_2} / d_3^{\gamma_1}}{T^{{\gamma_1}{\gamma_3}}}\right)^{\frac{1}{{\gamma_2}-{\gamma_1}}}
+
\max \left\{\frac{8 \ln (4K_1)}{\euler \ln (2)}, 4d_1\left(2\lambda^*\right)^{\gamma_1}\right\} .
\end{equation}
From inequalities~\eqref{eq:almostCobs} and \eqref{eq:K2} we conclude
 \begin{align*}
  \tilde C_{\mathrm{obs}} &\leq
  \frac{4M d_0 }{T\euler^{-\omega_+ T}} 
  \exp\left(4 \left(\frac{2^{\gamma_1} \alpha_0^{{\gamma_1}{\gamma_2}} d_1^{\gamma_2} / d_3^{\gamma_1}}{T^{{\gamma_1}{\gamma_3}}}\right)^{\frac{1}{{\gamma_2}-{\gamma_1}}} \right)  
  \max\left\{ (4K_1)^{8/(\euler \ln 2)} , \euler^{4d_1\left(2\lambda^*\right)^{\gamma_1}} \right\} .
 \end{align*}
Finally, we insert the values of $\alpha_0$ and $K_1$ and factor out $\euler^{10\omega_+ T / (\euler \ln 2)}$ from the maximum to obtain the assertion.
\end{proof}
\section{Sharp geometric conditions for observability of elliptic operators in \texorpdfstring{$L_p (\R^d)$}{Lp}}
\label{sec:L_p(Rd)}
In this section we consider the case where $X = L_p (\R^d)$, $Y = L_p (\thickset)$ with $1 < p < \infty$, $A_p$ is an elliptic operator in $L_p (\R^d)$ associated with a 
strongly elliptic polynomial in $\R^d$ of degree $m \geq 2$, $(S_t)_{t \geq 0}$ the $C_0$-semigroup on $L_p (\R^d)$ generated by $-A_p$, and $C = \1_\thickset$ is the restriction operator of a function in $L_p(\R^d)$ to some measurable subset $\thickset \subset \R^d$, i.e., $\1_\thickset \in \cL (L_p (\R^d) , L_p (\thickset))$ and $\1_\thickset f = f $ on $\thickset$. Let $T > 0$. Our goal is to show that the system
\begin{equation} \label{eq:system_L_p}
\begin{aligned}
  \dot{x}(t) & = -A_p x(t), \quad &t&\in (0,T],\quad x(0)  = x_0 \in L_p (\R^d), \\
  y(t)  &=  \1_\thickset x(t), \quad &t& \in [0,T] ,
  \end{aligned}
\end{equation}
satisfies a final state observability estimate in $L_r ((0,T);L_p (\R^d))$, $r\in[1,\infty]$ if and only if $\thickset$ is a so-called thick set; cf.\ Definition~\ref{Def:thick_set}. In particular, if $\thickset$ is a thick set, we conclude an observability estimate with an explicit dependence of $C_{\mathrm{obs}}$ on $T$, the order $m$ of the operator $A_p$, and the geometry of the set $\thickset$.
\par
We start by recalling the class of elliptic operators $A_p$ which we consider.
We denote by $\mathcal{S}(\R^d)$ the Schwartz space of rapidly decreasing functions, which is dense in $L_p(\R^d)$ for all $1< p <\infty$.
For $f\in \mathcal{S}(\R^d)$ let $\F f\from\R^d\to\C$ be the Fourier transform of $f$ defined by
\[\F f (\xi) := \frac{1}{(2\pi)^{d/2}}\int_{\R^d} f(x) \euler^{-\ii\xi\cdot x}\,\drm x.\]
Then $\F\from \mathcal{S}(\R^d)\to \mathcal{S}(\R^d)$ is bijective and continuous and has a continuous inverse, given by
\[\F^{-1} f(x) = \frac{1}{(2\pi)^{d/2}} \int_{\R^d} f(\xi) \euler^{\ii x\cdot \xi}\,\drm \xi\]
for all $f\in \mathcal{S}(\R^d)$. 
Let $a \from \R^d \to \C$ be a homogeneous strongly elliptic polynomial of degree $m \geq 2$, that is, $a$ is of the form
\[
 a (\xi) = \sum_{\lvert \alpha \rvert_1 = m} a_\alpha \ii^{\lvert \alpha \rvert_1} \xi^\alpha
\]
for given $a_\alpha \in \C$, and there is $c > 0$ such that for all $\xi \in \R^d$ we have
\begin{equation*}
 \re a (\xi) \geq c \lvert \xi \rvert^m .
\end{equation*}
Note that this implies that $m$ is even. For $f\in \mathcal{S}(\R^d)$ define $A f \in \mathcal{S}(\R^d)$ by
\[
Af := \sum_{\lvert \alpha \rvert_1 =m} a_\alpha \partial^\alpha f = \F^{-1} (a \F f).
\]
Then, for every $1< p < \infty$, $A$ is closable in $L_p(\R^d)$, and its closure $A_p$ is a sectorial operator of angle $\omega_a < \pi / 2$. As a consequence, $-A_p$ generates a bounded $C_0$-semigroup $(S_t)_{t \geq 0}$ on $L_p (\R^d)$. We call $A_p$ the \emph{elliptic operator associated with $a$}. For details we refer, e.g., to the book \cite{Haase-06}. 
\begin{example}
  Let $1 < p <\infty$ and $a \colon \R^d \to \R$ defined by $a(\xi) = \lvert \xi \rvert^2$. Then $a$ is a homogeneous strongly elliptic polynomial of degree $m=2$ and $A_p = -\Delta$ is the negative Laplacian in $L_p(\R^d)$.
 \par
  More generally, let $ (a_{i,j}) \in\R^{d\times d}$ be a symmetric and negative definite matrix, and define $a \colon \R^d \to \R$ by $a(\xi) = \xi^{\top} (a_{i,j}) \xi$ for all $\xi\in \R^d$.
  Then $a$ is a homogeneous strongly elliptic polynomial of degree $m=2$ and $A_p = -\diverg\, (a_{i,j}) \grad$ is the corresponding elliptic operator in $L_p(\R^d)$.  
\end{example}
\par
The following definition characterizes the class of subsets $\thickset \subset \R^d$ which we consider.
\begin{definition}\label{Def:thick_set}
  Let $\rho\in (0,1]$ and $L\in (0,\infty)^d$. A set $\thickset \subset \R^d$ is called \emph{$(\rho,L)$-thick} if $\thickset$ is measurable and for all $x \in \R^d$ we have
  \[
  \left\lvert \thickset \cap \left( \bigtimes_{i=1}^d (0,L_i) + x \right) \right\rvert \geq \rho \prod_{i=1}^d L_i .
  \]
  Here, $\lvert \cdot \rvert$ denotes Lebesgue measure in $\R^d$.
  Moreover, $\thickset \subset \R^d$ is called \emph{thick} if there are $\rho\in (0,1)$ and $L\in (0,\infty)^d$ such that $\thickset$ is $(\rho,L)$-thick.  
\end{definition}

We are now in position to state our main theorems of this section.
\begin{theorem}
\label{Thm:Observability_Elliptic_Operator_Rd}
  Let $1<p<\infty$, $r\in[1,\infty]$, $a\colon\R^d \to \C$ a homogeneous strongly elliptic polynomial in $\R^d$ of degree $m\geq 2$, $A_p$ the associated elliptic operator in $L_p(\R^d)$, $(S_t)_{t\geq 0}$ the bounded $C_0$-semigroup on $L_p (\R^d)$ generated by $-A_p$,
  $\thickset \subset \R^d$ a $(\rho,L)$-thick set, and $T > 0$. Then the system~\eqref{eq:system_L_p} satisfies a final state observability estimate in $L_r ((0,T);L_p (\R^d))$. In particular, we have for all $x_0 \in L_p (\R^d)$
  \begin{align*}
  \lVert S_T x_0 \rVert_{L_p(\R^d)} \leq C_{\mathrm{obs}} \lVert \1_\thickset S_{(\cdot)} x_0 \rVert_{L_r ((0,T);L_p(\thickset))}
  \end{align*}
  with
	  \begin{align*}
C_{\mathrm{obs}}
& = \frac{D_1 M^{16}}{T^{1/r}} \left( \frac{K^d}{\rho} \right)^{D_2} \exp \left(\frac{D_3 (\lvert L \rvert_1 \ln (K^d / \rho))^{m/(m-1)}}{(c T)^{\frac{1}{m-1}}} \right),
 \end{align*}
where $K\geq 1$ is a universal constant, $D_1,D_2 \geq 1$ depending on $d$, $D_3 \geq 1$ depending on $d$ and $p$, $M = \sup_{t \geq 0} \lVert S_t \rVert$, and $c > 0$ is such that $\re a (\xi) \geq c \lvert \xi \rvert^m$ for all $\xi \in \R^d$.
\end{theorem}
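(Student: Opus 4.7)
The plan is to verify the hypotheses of Theorem~\ref{thm:spectral+diss-obs} for a smooth Fourier-cutoff family $(P_\lambda)_{\lambda > 0}$. Fix a radial $\chi \in C_c^\infty(\R^d ; [0,1])$ with $\chi \equiv 1$ on $\ball{0}{1}$ and $\supp \chi \subset \ball{0}{2}$, and set $P_\lambda f = \F^{-1}(\chi(\cdot/\lambda) \F f)$ for $f \in \mathcal{S}(\R^d)$. Since $\F^{-1}(\chi(\cdot/\lambda))$ is a rescaling of the Schwartz function $\F^{-1}\chi$, the associated convolution kernel has $L_1$-norm independent of $\lambda$, so by Young's inequality $P_\lambda$ extends to a bounded operator on $L_p(\R^d)$ uniformly in $\lambda$; as Fourier multipliers, $P_\lambda$ and $S_t$ commute. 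I take $\lambda^* = 0$, and since $(S_t)_{t\geq 0}$ is bounded, $\omega = 0$ and hence $C_3 = 0$ in Theorem~\ref{thm:spectral+diss-obs}.

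For the uncertainty relation, observe that $P_\lambda f$ has Fourier transform supported in $\ball{0}{2\lambda}$. I would invoke the $L_p$-version of the Logvinenko--Sereda/Kovrijkine theorem (see \cite{Kovrijkine-00,Kovrijkine-01,EgidiV-16-arxiv}): there exist a universal constant $K \geq 1$ and $c_d > 0$ depending only on $d$ such that for every $g \in L_p(\R^d)$ with $\supp \F g \subset \ball{0}{2\lambda}$,
\[
\lVert g\rVert_{L_p(\R^d)} \leq \Bigl(\frac{K^d}{\rho}\Bigr)^{c_d(\lvert L\rvert_1 \lambda + 1)} \lVert g\rVert_{L_p(\thickset)} .
\]
Applied to $g = P_\lambda f$ this yields \eqref{eq:ass:uncertainty} with $\gamma_1 = 1$, $d_0 = (K^d/\rho)^{c_d}$ and $d_1 = c_d \lvert L\rvert_1 \ln(K^d/\rho)$.

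For the dissipation estimate, the composition $(\id - P_\lambda)S_t$ is convolution with the kernel
\[
g_{\lambda,t}(x) = (2\pi)^{-d/2}\int_{\R^d} \euler^{\ii x\cdot\xi}\bigl(1-\chi(\xi/\lambda)\bigr) \euler^{-a(\xi) t} \drm\xi .
\]
Homogeneity of $a$ and the substitution $\xi = \lambda\eta$ give $g_{\lambda,t}(x) = \lambda^d \tilde g_s(\lambda x)$, where $s = \lambda^m t$ and $\tilde g_s(y) = (2\pi)^{-d/2}\int \euler^{\ii y\cdot\eta}(1-\chi(\eta))\euler^{-sa(\eta)}\drm\eta$, so $\lVert g_{\lambda,t}\rVert_{L_1(\R^d)} = \lVert\tilde g_s\rVert_{L_1(\R^d)}$. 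Since $1-\chi$ vanishes on $\ball{0}{1}$ and $\re a(\eta) \geq c\lvert\eta\rvert^m$, I would extract a factor $\euler^{-cs/2}$ from $\euler^{-sa(\eta)}$ and estimate the Fourier inverse of the remaining symbol by integration by parts in $\eta$ (for polynomial decay in $y$) together with a direct pointwise bound for small $\lvert y\rvert$. This gives $\lVert\tilde g_s\rVert_{L_1(\R^d)} \leq d_2 \euler^{-d_3 s}$ with $d_3$ proportional to $c$ and $d_2$ depending only on $d$, $\chi$ and the coefficients of $a$. By Young's inequality this yields \eqref{eq:ass:dissipation} with $\gamma_2 = m$ and $\gamma_3 = 1$.

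With these parameters the exponent $\gamma_1\gamma_3/(\gamma_2-\gamma_1) = 1/(m-1)$ in Theorem~\ref{thm:spectral+diss-obs} matches the claim, and substituting $d_0, d_1, d_3$ into $C_1$ and $C_2$ produces the stated dependence on $\rho, L, c, T$ after absorbing universal constants into $D_1, D_2, D_3$; the factor $M^{16}$ is an upper bound for the combined powers of $M$ arising from $C_1$. The main obstacle is the dissipation estimate in $L_p$: unlike the Hilbert-space case, where Plancherel reduces the multiplier norm to an $L_\infty$ estimate of the symbol and the exponential factor is read off directly, in $L_p$ we must bound the $L_1$-norm of an oscillatory kernel while tracking both the exponential decay rate in $s = \lambda^m t$ and the $m$-homogeneous scaling, which requires a careful Fourier-analytic argument uniform on the relevant range of $s$.
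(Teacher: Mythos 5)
Your setup (smooth Fourier cutoffs $P_\lambda$, uniform $L_p$-boundedness via Young, Kovrijkine's $L_p$ Logvinenko--Sereda theorem giving \eqref{eq:ass:uncertainty} with $\gamma_1=1$, $d_0\sim (K^d/\rho)^{c_d}$, $d_1\sim |L|_1\ln(K^d/\rho)$) is exactly the paper's, but you take a genuinely different route for the dissipation estimate. The paper never touches oscillatory kernels in $L_p$: it proves \eqref{eq:ass:dissipation} for $p=2$ by Plancherel, where $\lVert (1-\chi_\lambda)\euler^{-ta}\rVert_{L_\infty}\le \euler^{-ct(\lambda/2)^m}$ uses only the ellipticity constant $c$, and then Riesz--Thorin interpolates this against the trivial uniform bound $\lVert(\id-P_\lambda)S_t\rVert_{L_{p_0}\to L_{p_0}}\le (1+C_d)M$, obtaining $d_2=(1+C_d)^{1-\theta}M^{1-\theta}$ and $d_3=c\theta/2^m$. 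This is why $D_3$ in the statement depends on $p$ (through $\theta$) and why the method degenerates as $p\to1,\infty$, but it keeps the dependence on the symbol $a$ confined to $c$ alone. Your direct $L_1$-kernel bound, after the scaling reduction to $s=\lambda^m t$, is plausible and would even give a $p$-independent rate $d_3\sim c$ (and would extend beyond $1<p<\infty$), but as sketched it has two concrete soft spots. First, the small-$s$ regime: after extracting $\euler^{-cs/2}$, the remaining symbol $(1-\chi(\eta))\euler^{-s(a(\eta)-c/2)}$ is not uniformly integrable as $s\to0$ (its $L_1$-norm in $\eta$ grows like $s^{-d/m}$), so the ``direct pointwise bound for small $\lvert y\rvert$'' fails to be uniform there; you must treat $s\lesssim 1$ separately by the trivial bound $\lVert \tilde g_s\rVert_{L_1}\le(1+\lVert\F^{-1}\chi\rVert_{L_1}(2\pi)^{-d/2})\lVert \F^{-1}\euler^{-a}\rVert_{L_1}$ and absorb the harmless factor $\euler^{c s/2}\le \euler^{c/2}$, reserving the integration-by-parts argument (which produces polynomial-in-$s$ losses to be absorbed into a smaller rate) for $s\gtrsim1$.

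Second, and more substantively for the statement as written: differentiating $\euler^{-sa(\eta)}$ makes your $d_2$ depend on the size of all coefficients $a_\alpha$ (e.g.\ through $\max_\alpha\lvert a_\alpha\rvert/c$), not only on $d$ and $c$. Since $d_2$ enters $C_1$ of Theorem~\ref{thm:spectral+diss-obs} through $K_1$, your final constant has a prefactor depending on the full symbol $a$, whereas the theorem asserts $D_1, D_2$ depend only on $d$ (and $M$ enters only via the explicit power $M^{16}$). So your argument proves the observability estimate with the correct $T$, $\rho$, $L$, $c$ dependence, but not the constant in the precise form claimed; to recover that form you either need the paper's interpolation trick or an additional argument removing the dependence on the coefficients of $a$ beyond $c$ and $m$.
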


The universal constant $K$ in Theorem~\ref{Thm:Observability_Elliptic_Operator_Rd} can be chosen to be the same as the constant $K$ in the Logvinenko--Sereda theorem (Theorem~\ref{Thm:Logvinenko-Sereda_Rd}). Theorem~\ref{Thm:Observability_Elliptic_Operator_Rd} shows that the system \eqref{eq:system_L_p} satisfies a final state observability estimate if $\thickset$ is a thick set. Note that $C_{\mathrm{obs}}$ is optimal in $T$ (see Remark \ref{rem:UR-OBS}\ref{rem:UR-OBS:2}), as well as in the geometric parameters $\rho$ and $L$ by \cite[Remark 4.14]{NakicTTV-18b-arxiv}.
The following theorem shows the converse: If the system \eqref{eq:system_L_p} satisfies a final state observability estimate, then the set $\thickset$ is necessarily a thick set.
\begin{theorem}
\label{Thm:Thick_set_Rd}
  Let $1<p<\infty$, $r\in[1,\infty]$, $a\colon\R^d \to \C$ a homogeneous strongly elliptic polynomial in $\R^d$ of degree $m\geq 2$, $A_p$ the associated elliptic operator in $L_p(\R^d)$, $(S_t)_{t\geq 0}$ the bounded $C_0$-semigroup on $L_p (\R^d)$ generated by $-A_p$,
  $\thickset\subset \R^d$ measurable, and $T > 0$, and assume there exists $C_{\mathrm{obs}}>0$ such that for all $x_0 \in L_p (\R^d)$ we have
  \[
  \lVert S_T x_0 \rVert_{L_p(\R^d)} \leq C_{\mathrm{obs}} \lVert \1_\thickset S_{(\cdot)} x_0 \rVert_{L_r ((0,T);L_p(\thickset) )}.
  \]
  Then $\thickset$ is a thick set.
\end{theorem}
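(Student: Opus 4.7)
The plan is to argue by contraposition: assuming $\thickset$ is not thick, I will construct a sequence of initial data whose final states have uniformly positive $L_p(\R^d)$-norm while the corresponding observations decay to zero, thereby violating the observability estimate. The guiding principle is that $-A_p$ is a Fourier multiplier with translation-invariant symbol, so that translating a single fixed test function moves the mass of $S_t x_0$ into arbitrarily sparse portions of $\thickset$ without changing $\lVert S_T x_0 \rVert_{L_p(\R^d)}$.

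First I would quantify non-thickness. Negating Definition~\ref{Def:thick_set} with cube side lengths $L_i = n$ and density $\rho = 1/n^{d+1}$, for every $n \in \N$ there is $y_n \in \R^d$ such that $F_n := \thickset - y_n$ satisfies $\lvert F_n \cap [-n/2,n/2]^d \rvert < 1/n$. Next I would fix, once and for all, a nonzero $\phi \in \mathcal{S}(\R^d)$ with $\F\phi \in C_c^\infty(\R^d)$. Since $\widehat{S_t \phi} = \euler^{-t a}\F\phi$ is supported in the fixed compact set $\supp \F\phi$ and $(t,\xi) \mapsto \partial_\xi^\beta(\euler^{-t a(\xi)}\F\phi(\xi))$ is jointly continuous on $[0,T]\times\R^d$ for every multi-index $\beta$, the family $\{S_t\phi\}_{t\in[0,T]}$ is bounded in $\mathcal{S}(\R^d)$. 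In particular $C_\infty := \sup_{t\in[0,T]} \lVert S_t\phi \rVert_{L_\infty(\R^d)} < \infty$ and
\[
 \tau(R) := \sup_{t\in[0,T]} \int_{\lvert z \rvert > R} \lvert S_t\phi(z) \rvert^p \,\drm z \longrightarrow 0 \quad \text{as } R\to\infty .
\]
Moreover, since $\widehat{S_T\phi}(\xi) = \euler^{-T a(\xi)}\F\phi(\xi) \not\equiv 0$, we have $a_0 := \lVert S_T\phi \rVert_{L_p(\R^d)} > 0$.

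Now I would set $x_0^{(n)} := \phi(\cdot - y_n)$. Since $A_p$ has constant coefficients, it commutes with translations on $\mathcal{S}(\R^d)$; by density and closedness, so does $A_p$ on its domain, and therefore $S_t$ commutes with translations on $L_p(\R^d)$. Because translations are $L_p$-isometries, $\lVert S_T x_0^{(n)} \rVert_{L_p(\R^d)} = a_0$ for every $n$. A change of variables yields, for any $R > 0$,
\[
 \lVert \1_\thickset S_t x_0^{(n)} \rVert_{L_p(\thickset)}^p
 = \int_{F_n} \lvert S_t\phi(z) \rvert^p \,\drm z
 \leq C_\infty^p \, \lvert F_n \cap \ball{0}{R} \rvert + \tau(R) .
\]
Given $\epsilon > 0$, pick $R$ so large that $\tau(R) < \epsilon^p/2$, then $n$ so large that $\ball{0}{R} \subset [-n/2,n/2]^d$ and $C_\infty^p / n < \epsilon^p/2$; the right-hand side is then below $\epsilon^p$ uniformly in $t \in [0,T]$. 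Hence $\lVert \1_\thickset S_{(\cdot)} x_0^{(n)} \rVert_{L_r((0,T);L_p(\thickset))} \to 0$ for every $r \in [1,\infty]$, which combined with $\lVert S_T x_0^{(n)} \rVert_{L_p(\R^d)} = a_0 > 0$ contradicts the observability estimate.

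The step I expect to be most delicate is the uniform-in-$t$ tail bound $\tau(R)\to 0$, i.e.\ the $L_p$-tightness of $\{S_t\phi\}_{t\in[0,T]}$. Insisting that $\F\phi$ is compactly supported is what makes this clean: $\widehat{S_t\phi}$ then stays in a fixed compact set with $C^k$-norms uniformly bounded in $t$, and Fourier inversion combined with integration by parts yields Schwartz-type pointwise decay for $S_t\phi$ with constants depending only on $\supp\F\phi$ and $T$. The remaining ingredients---translation invariance of $S_t$ on $L_p(\R^d)$ and the nonvanishing of $S_T\phi$---are routine consequences of $A_p$ being a constant-coefficient Fourier multiplier whose symbol never vanishes.
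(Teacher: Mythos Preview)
Your proposal is correct and follows the same contrapositive scheme as the paper: translate a fixed test function to centers where $\thickset$ has vanishing relative measure, so that the final state keeps a fixed $L_p$-norm while the observation tends to zero uniformly in $t\in[0,T]$.

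The only genuine difference is the device used to obtain the uniform-in-$t$ tail control. The paper takes as test function the kernel itself, $f_n = p_1(\cdot - x_n)$ with $p_t = (2\pi)^{-d/2}\F^{-1}\euler^{-ta}$, so that $S_t f_n = p_{t+1}(\cdot - x_n)$; the tail bound then comes from the \emph{self-similarity} $p_t(x) = t^{-d/m} p_1(x/t^{1/m})$ afforded by the homogeneity of $a$, which reduces everything to integrability properties of the single function $p_1$. You instead choose $\phi$ with $\F\phi\in C_c^\infty$ and argue that $\{S_t\phi\}_{t\in[0,T]}$ is bounded in $\mathcal{S}(\R^d)$, extracting both the $L_\infty$ bound and the tail decay from this. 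Your route is slightly more robust (it does not actually use the homogeneity of $a$, only smoothness of $\euler^{-ta}$ on a fixed compactum), whereas the paper's route is more explicit and avoids the Schwartz-space compactness argument. Either way the remaining steps---translation invariance of $S_t$, nonvanishing of $S_T\phi$, and the splitting of the integral over $\thickset - y_n$---are identical in substance.
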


In order to prove Theorem~\ref{Thm:Observability_Elliptic_Operator_Rd} we apply Theorem~\ref{thm:spectral+diss-obs} in the case where $X = L_p (\R^d)$, $Y = L_p(\thickset)$, $C\in \mathcal{L}(X,Y)$ is the restriction operator of functions from $\R^d$ to $\thickset$, and $A = A_p$. To this end we define a family $(P_\lambda)_{\lambda > 0}$ of operators in $L_p (\R^d)$ such that the assumptions of Theorem~\ref{thm:spectral+diss-obs}, i.e., the uncertainty relation \eqref{eq:ass:uncertainty} and the dissipation estimate \eqref{eq:ass:dissipation}, are satisfied. Concerning the dissipation estimate we first consider the case $p = 2$ and then apply the Riesz--Thorin interpolation theorem. For the uncertainty relation we shall need a so-called Logvinenko--Sereda theorem. 
It was originally proven by Logvinenko and Sereda in \cite{LogvinenkoS-74} and significantly improved by Kovrijkine in \cite{Kovrijkine-00,Kovrijkine-01}. Recently, it has been adapted to functions on the torus instead of $\R^d$; see \cite{EgidiV-16-arxiv}. We quote a special case of Theorem~1 from \cite{Kovrijkine-01}.
\begin{theorem}[Logvinenko--Sereda theorem]
\label{Thm:Logvinenko-Sereda_Rd}
  There exists $K\geq 1$ such that for all $p\in [1,\infty]$, all $\lambda > 0$, all $\rho \in (0,1]$, all $L \in (0,\infty)^d$, all $(\rho,L)$-thick sets $\thickset \subset \R^d$, and all $f\in \mathcal{S}(\R^d)$ satisfying $\supp \F f \subset [-\lambda,\lambda]^d$ we have
  \[
  \lVert f \rVert_{L_p(\R^d)} \leq \Bigl(\frac{\rho}{K^d}\Bigr)^{-K(d+2 \lambda \lvert L \rvert_1)} \lVert f \rVert_{L_p(\thickset)}.
  \]
\end{theorem}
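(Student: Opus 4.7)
The plan is to follow Kovrijkine's approach, which proves the theorem in three layers: rescale, prove a local inequality on a unit cube, and then glue together via $\ell^p$ summation.

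\textbf{Reduction.} First I would rescale so that the thickness cell becomes the unit cube. Setting $g(x_1,\dots,x_d)=f(L_1 x_1,\dots,L_d x_d)$, the set $\thickset'=\{(x_1/L_1,\dots,x_d/L_d):x\in \thickset\}$ becomes $(\rho,(1,\dots,1))$-thick and $\supp \F g \subset \prod_{i=1}^d [-\lambda L_i,\lambda L_i]$. So without loss of generality the cubes have unit side length, and the relevant ``type'' parameter is $\tau := 2\lambda\lvert L\rvert_1$ (the sum of the side lengths of the spectral box). Let $Q_k = k+(0,1)^d$ for $k\in\Z^d$; thickness gives $\lvert \thickset\cap Q_k\rvert\geq \rho$ for every $k$.

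\textbf{Local estimate.} The heart of the matter is a purely local Remez-type bound: there is a universal $K\geq 1$ such that for every $k\in\Z^d$,
\begin{equation*}
 \lVert f\rVert_{L_p(Q_k)} \leq \Bigl(\frac{K^d}{\rho}\Bigr)^{K(d+\tau)} \lVert f\rVert_{L_p(\thickset\cap Q_k)}.
\end{equation*}
To prove this I would use a good/bad cube dichotomy. Because $\F f$ is compactly supported, Bernstein's inequality yields $\lVert \partial^\alpha f\rVert_{L_p(\R^d)} \leq (C\lambda L)^{\alpha} \lVert f\rVert_{L_p(\R^d)}$ componentwise. Declare $Q_k$ \emph{good} if for all multi-indices $\alpha$
\begin{equation*}
 \lVert \partial^\alpha f\rVert_{L_p(Q_k)}^{p} \leq (A\tau)^{p\lvert \alpha\rvert_1}\, 2^{\lvert \alpha\rvert_1}\,\lVert f\rVert_{L_p(Q_k)}^{p}
\end{equation*}
for a sufficiently large absolute $A$; otherwise call $Q_k$ bad. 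A geometric-series argument (summing the failed $\alpha$'s) shows the mass on bad cubes is at most a fixed fraction of $\lVert f\rVert_{L_p(\R^d)}^p$, which can be absorbed. On a good cube I would restrict $f$ to an axis-parallel line segment chosen by the pigeonhole principle so that its Taylor coefficients satisfy a Bernstein bound of exponential-type order $\tau$. On that one-dimensional segment, a function with such control behaves essentially like a polynomial of degree $\sim \tau$, and the classical Turán/Remez inequality
\begin{equation*}
 \max_{[0,1]} \lvert p\rvert \leq \Bigl(\frac{C}{\mu}\Bigr)^{\deg p}\max_{E} \lvert p\rvert,\qquad \lvert E\rvert\geq\mu,
\end{equation*}
yields the required local bound after integrating in the transverse variables and using $\lvert \thickset\cap Q_k\rvert\geq \rho$.

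\textbf{Global summation.} Raising the local inequality to the $p$-th power and summing over $k\in\Z^d$,
\begin{equation*}
 \lVert f\rVert_{L_p(\R^d)}^p \leq \Bigl(\frac{K^d}{\rho}\Bigr)^{pK(d+\tau)} \sum_{k\in\Z^d}\lVert f\rVert_{L_p(\thickset\cap Q_k)}^p = \Bigl(\frac{K^d}{\rho}\Bigr)^{pK(d+\tau)} \lVert f\rVert_{L_p(\thickset)}^p,
\end{equation*}
which gives the theorem after taking $p$-th roots and recalling $\tau=2\lambda\lvert L\rvert_1$. The $p=\infty$ case follows by a similar argument where $L_p$-norms are replaced by $L_\infty$-norms and the Remez step is applied directly.

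\textbf{Main obstacle.} The delicate step is the good-cube local inequality: one must simultaneously extract a 1-D slice on which the exponential-type control survives, ensure the thick set still occupies a reasonable fraction on that slice (a Fubini-type argument), and apply a sharp Turán-/Nazarov-type inequality with the right $(\rho/K^d)^{K\tau}$ dependence. Getting a single absolute $K$ that works uniformly in $p\in[1,\infty]$ is the most technical part, since the Bernstein constants and the Remez constants must be tracked carefully through the interpolation in $p$.
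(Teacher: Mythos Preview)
The paper does not prove this theorem at all: it is simply quoted as a special case of Theorem~1 in \cite{Kovrijkine-01}, and the proof is left to that reference. Your outline is a faithful sketch of Kovrijkine's argument (rescale to unit cubes, Bernstein-based good/bad dichotomy, one-dimensional Tur\'an--Remez on good cubes, $\ell^p$ summation), so there is no discrepancy in approach --- you are reproducing the cited proof, whereas the paper itself treats the result as a black box.
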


We now proceed with the proofs of Theorems~\ref{Thm:Observability_Elliptic_Operator_Rd} and \ref{Thm:Thick_set_Rd}.
\begin{proof}[Proof of Theorem~\ref{Thm:Observability_Elliptic_Operator_Rd}]
We apply Theorem~\ref{thm:spectral+diss-obs} in the case where $X = L_p (\R^d)$, $Y=L_p(\thickset)$, $C\in \mathcal{L}(X,Y)$ is the restriction of functions from $\R^d$ to $\thickset$, and $A = A_p$. For this purpose we define a family $(P_\lambda)_{\lambda > 0}$ of operators in $L_p (\R^d)$ such that the assumptions of Theorem~\ref{thm:spectral+diss-obs} are satisfied. 
Let $\eta\in C_{\mathrm c}^\infty ([0,\infty) )$ with $0\leq\eta\leq 1$ such that $\eta (r) = 1$ for $r\in [0,1/2]$ and $\eta (r) = 0$ for $r\geq 1$. 
  For $\lambda > 0$ we define $\chi_\lambda\from \R^d\to \R$ by $\chi_\lambda (\xi) = \eta (\lvert \xi \rvert / \lambda)$. Since $\chi_\lambda \in \mathcal{S}(\R^d)$ for all $\lambda >0$, we have $\mathcal{F}^{-1}\chi_\lambda \in \mathcal{S}(\R^d)$. For $\lambda > 0$ we define $P_\lambda \from  L_p(\R^d) \to L_p(\R^d)$ by $P_\lambda f = (2\pi)^{-d/2}(\mathcal{F}^{-1} \chi_\lambda) \ast f$.
By Young's inequality we have for all $f\in L_p(\R^d)$
\[
 \lVert P_\lambda f \rVert_{L_p(\R^d)} = \lVert (2\pi)^{-d/2}(\mathcal{F}^{-1} \chi_\lambda) \ast f \rVert_{L_p(\R^d)} \leq (2\pi)^{-d/2}\lVert \mathcal{F}^{-1} \chi_\lambda \rVert_{L_1(\R^d)} \lVert f \rVert_{L_p(\R^d)}.
\]
Moreover, the norm $\lVert \mathcal{F}^{-1} \chi_\lambda \rVert_{L_1(\R^d)}$ is independent of $\lambda >0$. Indeed, by the scaling property of the Fourier transform and by change of variables we have for all $\lambda > 0$
\begin{align*}
\lVert \mathcal{F}^{-1} \chi_\lambda \rVert_{L_1(\R^d)} 
= 
\lvert \lambda \rvert^d \lVert (\mathcal{F}^{-1} \chi_1) (\lambda \cdot)\rVert_{L_1(\R^d)}
= 
\lVert \mathcal{F}^{-1} \chi_1 \rVert_{L_1(\R^d)}.
\end{align*}
Hence, for all $\lambda > 0$ the operator $P_\lambda$ is a bounded linear operator, and the family $(P_\lambda)_{\lambda>0}$ is uniformly bounded by $C_d:=(2\pi)^{-d/2}\lVert \mathcal{F}^{-1} \chi_1 \rVert_{L_1(\R^d)}$.
For all  $f\in \mathcal{S}(\R^d)$ we have by construction $P_\lambda f\in\mathcal{S}(\R^d)$, $\F P_\lambda f = \chi_\lambda \F f \in \mathcal{S}(\R^d)$, and $\supp \F P_\lambda f \subset \{y \in \R^d \colon \lvert y \rvert \leq \lambda\} \subset [-\lambda, \lambda]^d$.
  By Theorem~\ref{Thm:Logvinenko-Sereda_Rd} we obtain for all $\lambda > 0$ and all $f \in \mathcal{S}(\R^d)$
  \begin{subequations}\label{eq:uncertainty_L_p}
  \begin{equation}
  \lVert P_\lambda f \rVert_{L_p(\R^d)} \leq d_0 \euler^{d_1 \lambda}  \lVert P_\lambda f \rVert_{L_p(\thickset)} ,
  \end{equation}
  where
  \begin{equation} 
   d_0 = \euler^{-Kd \ln (\rho / K^d)}
   \quad\text{and}\quad
   d_1 = - 2 K \lvert L \rvert_1 \ln \left(\frac{\rho}{K^d} \right) .
  \end{equation}
  \end{subequations}
  Since $\mathcal{S} (\R^d)$ is dense in $L_p (\R^d)$ and $P_\lambda$ is bounded, inequality~\eqref{eq:uncertainty_L_p} holds for all $f \in L_p (\R^d)$. Thus, the uncertainty relation \eqref{eq:ass:uncertainty} of Theorem~\ref{thm:spectral+diss-obs} is satisfied with $d_0$ and $d_1$ as in \eqref{eq:uncertainty_L_p}, $\gamma_1 = 1$, and $\lambda^* = 0$.
  \par
  It remains to verify the dissipation estimate. Since $\F S_t \F^{-1} f = \euler^{-ta} f$ for $f \in \mathcal{S} (\R^d)$ by functional calculus arguments (see, e.g., section~8 in \cite{Haase-06}), and since the Fourier transform is an isometry in $L_2 (\R^d)$, we obtain for all $f\in\mathcal{S}(\R^d)$ and all $\lambda > 0$
  \begin{align*}
  \lVert (1-P_\lambda)S_t f \rVert_{L_2(\R^d)}
  &= 
  \lVert \F^{-1} (1-\chi_\lambda) \euler^{-ta} \F f\rVert_{L_2(\R^d)} \\ &\leq 
  \lVert (1-\chi_\lambda)\euler^{-ta}\rVert_{L_\infty(\R^d)} \lVert f\rVert_{L_2(\R^d)}.
    \end{align*}
  Since 
	$1-\chi_\lambda \le 1-\1_{\ball{0}{\lambda / 2}} \leq 1$, $\re a (\xi) \geq c \lvert \xi \rvert^m$ for all $\xi \in \R^d$, and since $\mathcal{S} (\R^d)$ is dense in $L_2 (\R^d)$, this yields for all $f \in L_2 (\R^d)$
  \begin{align}\label{eq:dissipation_L2}
   \lVert (1-P_\lambda)S_t f \rVert_{L_2(\R^d)} & 
    \leq \euler^{-ct (\lambda / 2)^m} \lVert f \rVert_{L_2(\R^d)}.
  \end{align}
  This shows that the dissipation estimate \eqref{eq:ass:dissipation} of Theorem~\ref{thm:spectral+diss-obs} is satisfied if $p = 2$. In order to treat the case $p \not = 2$ we apply the Riesz--Thorin interpolation theorem. 
  Let 
  \[
   p_0 := \begin{cases}
          p^2-2p+2 & \text{if $p \in (1,2)$,} \\
          2 p & \text{if $p \in (2,\infty)$,} 
         \end{cases}
   \quad\text{and}\quad
   \theta := \begin{cases}
             \frac{-2p^2+6p-4}{-p^3+2p^2} & \text{if $p \in (1,2)$,} \\
             \frac{1}{p-1} &  \text{if $p \in (2,\infty)$.} 
             \end{cases}
  \]
 If $p = 2$ we set $p_0 := 2$ and $\theta := 1$ for convenience.
 Then $p_0\in (1,\infty)$, $\theta\in (0,1]$, and
 \[
 \frac{1}{p} = \frac{1-\theta}{p_0} + \frac{\theta}{2}.
 \]
Since the family $(P_\lambda)_{\lambda>0}$ is uniformly bounded by $C_d$, we have for all $f\in L_{p_0}(\R^d)$ and all $\lambda > 0$
 \begin{align*}
 \lVert (1-P_\lambda) S_t f \rVert_{L_{p_0}(\R^d)} 
 \leq \bigl(1+ C_d \bigr) M \lVert f \rVert_{L_{p_0}(\R^d)} ,
 \end{align*}
where $M = \sup_{t \geq 0} \lVert S_t \rVert$.
Interpolation between $L_2(\R^d)$ and $L_{p_0}(\R^d)$ if $p \not = 2$, and inequality~\eqref{eq:dissipation_L2} if $p = 2$, now yields for all $f \in L_p (\R^d)$
\begin{subequations}\label{eq:dissipation_L_p}
\begin{equation} 
\lVert (1-P_\lambda)S_t f \rVert_{L_p(\R^d)} \leq d_2 \euler^{-d_3 t \lambda^m} \lVert f \rVert_{L_p(\R^d)} ,
\end{equation}
where
\begin{equation}
 d_2 = (1+C_d)^{1-\theta} M ^{1-\theta} \quad\text{and} \quad
 d_3 = c\theta / 2^m .
\end{equation}
\end{subequations}
Thus, the dissipation estimate \eqref{eq:ass:uncertainty} of Theorem~\ref{thm:spectral+diss-obs} is satisfied with $d_2$ and $d_3$ as in \eqref{eq:dissipation_L_p}, $\gamma_2 = m$, $\gamma_3 = 1$, and $\lambda^* = 0$.
Since $\gamma_2 = m > 1 = \gamma_1$, we conclude from the uncertainty relation \eqref{eq:uncertainty_L_p}, the dissipation estimate \eqref{eq:dissipation_L_p}, and Theorem~\ref{thm:spectral+diss-obs} that the statement of the theorem holds with
\[
\tilde C_{\mathrm{obs}} = \frac{4Md_0 (4K_1)^{\frac{8}{\euler \ln 2}}}{T^{1/r}} \exp \Biggl(\frac{4 \bigl(2 \cdot 8^\frac{m}{m-1} d_1^{m} / d_3 \bigr)^{\frac{1}{m-1}}}{T^{\frac{1}{m - 1}}}\Biggr) ,
\]
where $K_1 = (d_0 +1)M^{2}d_2$, and $T^{1/r} = 1$ if $r = \infty$. From the definitions of $d_i$, $i \in \{0,1,2,3\}$ and straightforward estimates we obtain $\tilde C_{\mathrm{obs}} \leq C_{\mathrm{obs}}$ with $C_{\mathrm{obs}}$ as in the theorem.
\end{proof}
\begin{remark}
  As the proof shows we can obtain an explicit dependence of $C_{\mathrm{obs}}$ on $p$. Then, it turns out that $C_{\mathrm{obs}} \to \infty$ as $p\to 1$ and as $p\to\infty$.
  This shows that our method of proof is only valid for $p\in (1,\infty)$.
\end{remark}
\begin{proof}[Proof of Theorem~\ref{Thm:Thick_set_Rd}]
  We improve the strategy developed in \cite{EgidiV-18}. 
  We show the contraposition. Assume that $\thickset$ is not thick. Then there exists a sequence $(x_n)_{n \in \N}$ in $\R^d$ such that for all $n \in \N$ we have
  \begin{equation}\label{eq:not-thick}
  \lvert \thickset\cap \ball{x_n}{n} \rvert < \frac{1}{n} .
  \end{equation}
  Note that $\euler^{-ta}\in\mathcal{S} (\R^d)$ for $t>0$. Thus, for $t>0$, the operator $S_t$ is given as a convolution operator with convolution kernel
  $p_t :=(1 / (2\pi)^{d/2}) \F^{-1}\euler^{-ta} \in \mathcal{S} (\R^d)$ for $t>0$. Indeed, for $f\in \mathcal{S}(\R^d)$ and $t>0$ we have $\euler^{-ta} \F f\in \mathcal{S}(\R^d)$ and
  \[S_t f = \F^{-1} \euler^{-ta} \F f = (2\pi)^{d/2} \F^{-1} (\F p_t \F f) = p_t * f,\]
  and the claim follows by density.
  For $n\in\N$ we define $f_n := p_1(\cdot-x_n)$. As a consequence, we observe 
  for all $t>0$ and $n\in\N$
  \begin{equation}\label{eq:Faltung}
  S_t f_n = p_t * f_n = p_t * p_1(\cdot-x_n) = p_{t+1}(\cdot-x_n),
  \end{equation}
  and hence by translation invariance of the Lebesgue measure
  \begin{equation} \label{eq:1a}
  \lVert S_T f_n \rVert_{L_p(\R^d)} = \lVert p_{T+1}(\cdot-x_n) \rVert_{L_p(\R^d)} = \lVert p_{T+1} \rVert_{L_p(\R^d)}.
  \end{equation}
 For $n \in \N$ we now shift the set $\thickset$ by $x_n$ and consider the set $\thickset - x_n = \{y \in \R^d \colon y + x_n \in \thickset\}$. Note that \eqref{eq:not-thick} is equivalent to $\lvert (\thickset-x_n) \cap \ball{0}{n} \rvert < 1/n$ for all $n \in \N$. From the latter fact, \eqref{eq:Faltung}, and substitution we obtain for all $t > 0$ and $n \in \N$
 \begin{align}
    \lVert \1_\thickset S_t f_n \rVert_{L_p(\R^d)}^p 
    &= 
    \lVert \1_{(\thickset-x_n) \cap \ball{0}{n}} p_{t+1} \rVert_{L_p(\R^d)}^p + 
    \lVert \1_{(\thickset-x_n)} (1-\1_{\ball{0}{n}}) p_{t+1} \rVert_{L_p(\R^d)}^p \nonumber \\[1ex]
    &\leq
    \lVert p_{t+1} \rVert_{L_{\infty} (\R^d)}^p \lvert (\thickset-x_n) \cap \ball{0}{n} \rvert +
    \lVert (1-\1_{\ball{0}{n}}) p_{t+1} \rVert_{L_p(\R^d)}^p \nonumber \\
    & < 
    \frac{1}{n}\lVert p_{t+1} \rVert_{L_{\infty}(\R^d)}^p  +
    \lVert  (1-\1_{\ball{0}{n}}) p_{t+1} \rVert_{L_p(\R^d)}^p . 
    \label{eq:split-integrals}
 \end{align}
 Since $a$ is a homogeneous polynomial, we have by substitution $p_t (x) = t^{-d/m} p_1 (x \allowbreak{} / t^{1/m})$. Hence, we find for all $t > 0$
 \begin{equation} \label{eq:eq1}
  \lVert p_{t+1} \rVert_{L_{\infty}(\R^d)} = \frac{1}{(t+1)^{d/m}} \lVert p_{1} \rVert_{L_{\infty}(\R^d)} \leq \lVert p_{1} \rVert_{L_{\infty}(\R^d)} .
 \end{equation}
 Moreover, it follows for all $t \in (0,T]$ that
 \begin{align}
  \lVert  (1-\1_{\ball{0}{n}}) p_{t+1} \rVert_{L_p(\R^d)}^p 
  & =
  \int_{\R^d}  \bigl(1-\1_{\ball{0}{n}}(x)\bigr) \biggl\lvert \frac{p_1 (x/(t+1)^{1/m})}{(t+1)^{d/m}}   \biggr\rvert^p \drm x \nonumber \\[1ex]
  & = 
  \int_{\R^d}  \bigl(1-\1_{\ball{0}{n / (t+1)^{1/m}}}(x)\bigr) \bigl\lvert p_1 (x)  \bigr\rvert^p \frac{1}{(t+1)^{(p-1)d/m}}\drm x \nonumber \\
  & \leq
  \int_{\R^d}  \bigl(1-\1_{\ball{0}{n/ (T+1)^{1/m}}}(x)\bigr) \bigl\lvert p_1 (x)  \bigr\rvert^p \drm x .
  \label{eq:eq2}
 \end{align}
 From \eqref{eq:split-integrals}, \eqref{eq:eq1}, and \eqref{eq:eq2} (and since $p_1$ is a Schwartz function and hence integrable), we obtain that
 \begin{equation} \label{eq:2a}
  \lVert \1_\thickset S_{(\cdot)} f_n \rVert_{L_r ((0,T) ; L_p (\R^d))} \to 0
 \end{equation}
 as $n$ tends to infinity.
  From \eqref{eq:1a} and \eqref{eq:2a} we conclude that for all $C_{\mathrm{obs}} > 0$ there exists $x_0 \in L_p (\R^d)$ such that
  \[
   \lVert S_T x_0 \rVert_{L_p (\R^d)} > C_{\mathrm{obs}} \lVert \1_{\thickset} S_{(\cdot)} x_0 \rVert_{L_r((0,T);L_p (\thickset))}.
  \]
  This proves the contraposition of the theorem.
\end{proof}
\section{Null-controllability and control costs}
\label{sec:Control_Costs}
Let $X$ and $U$ be Banach spaces, $(S_t)_{t \geq 0}$ be a $C_0$-semigroup on $X$, $-A$ the corresponding infinitesimal generator on $X$, $B \in \cL (U,X)$, and $T > 0$. We consider the linear control system
\begin{align}
  \dot x(t) & = -Ax(t) + Bu(t), \quad t \in (0,T], \quad x(0) = x_0 \in X,
	\label{eq:System}
\end{align}
where $u\in L_r ((0,T);U )$ with $1 \le r \le \infty$. 
The function $x$ is called \emph{state function}, and $u$ is called \emph{control function}.
The unique mild solution of \eqref{eq:System} is given by Duhamel's formula
\begin{equation*}
x(t) = S_t x_0 + \int_0^t S_{t-\tau} Bu(\tau) \drm\tau , \quad t \in [0,T].
\end{equation*}
We say that the system \eqref{eq:System} is \emph{null-controllable in time $T$ via $L_r ((0,T);U )$} if for all $x_0\in X$ there exists $u\in L_r ((0,T);U )$ such that $x (T) = 0$.
The \emph{controllability map} is given by $\mathcal{B}^T \colon L_r ((0,T); U) \to X$, 
\begin{equation}\label{eq:controllability-map}
\mathcal{B}^T u := \int_0^T S_{T-\tau} Bu(\tau) \drm\tau .
\end{equation}
Note that we suppress the dependence of $\mathcal{B}^T$ on $r$. The system \eqref{eq:System} is null-controllable in time $T$ via $L_r ((0,T);U )$ if and only if $\ran \mathcal{B}^T \supset \ran S_T$. This gives an alternative definition of null-controllability. 
\par
Denote by $A'$ in $X'$ the dual operator of $A$ and by $B' \in \cL (X' , U')$ the dual operator of $B$. It is well known that null-controllability of the system \eqref{eq:System} is in certain situations equivalent to final state observability of its adjoint or dual system 
\begin{equation} \label{eq:AdjointSystem}
\begin{aligned}
  \dot{\varphi}(t) & = -A' \varphi(t), \quad & & t\in (0,T]  ,\quad \varphi(0)  = \varphi_0 \in X', \\
  \psi(t)  &= B' \varphi(t), \quad   & & t\in [0,T]  .
  \end{aligned}
\end{equation}
Recall that the system~\eqref{eq:AdjointSystem} satisfies a final state observability estimate in $L_{r'} ((0,T);\allowbreak{} U')$, $r' \in [1,\infty]$ if there exists $C_{\mathrm{obs}} > 0$ such that for all $\varphi_0 \in X'$ we have $\lVert \varphi(T) \rVert_{X'} \leq C_{\mathrm{obs}} \lVert \psi \rVert_{L_{r'} ((0,T); \allowbreak{} U')}$.
This equivalence can be described in an abstract form due to Douglas \cite{Douglas-66} and Dolecki and Russell \cite{DoleckiR-77}; see in particular Theorem~2.5 and Section~5 in \cite{DoleckiR-77}.
\begin{lemma}[{\cite{Douglas-66,DoleckiR-77}}]\label{lemma:DR}
Let $V,W,Z$ be reflexive Banach spaces, and let $F\in \cL(V,Z)$, $G\in \cL(W,Z)$. Then the following are equivalent:
\begin{enumerate}[(a)]
	\item $\ran F \subset \ran G$.
	\item There exists $c_1>0$ such that $\lVert F'z' \rVert_{V'} \leq c_1 \lVert G'z' \rVert_{W'}$ for all $z'\in Z'$.
	\item There exists $H\colon \overline{\ran G'} \to V'$ and $c_2>0$ such that $HG' = F'$ and $\lVert Hw' \rVert_{V'} \le c_2 \lVert w' \rVert_{W'}$ for all $w' \in \overline{\ran G'}$.
\end{enumerate}
Moreover, in (b) and (c) we can choose $c_1=c_2$.
\end{lemma}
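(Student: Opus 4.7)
The plan is to close the loop (a) $\Rightarrow$ (b) $\Rightarrow$ (c) $\Rightarrow$ (a) while tracking constants, so that the best constant in (b) coincides with the best constant in (c). Reflexivity of $W$ will enter crucially only in the last step; the other two implications are valid in general Banach spaces.

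For (a) $\Rightarrow$ (b), I would factor $G$ through the quotient as $\tilde G\from W/\ker G \to Z$, which is bounded and injective with $\ran \tilde G = \ran G \supset \ran F$. Hence $Tv := \tilde G^{-1}(Fv)$ defines a linear map $T\from V \to W/\ker G$, and a standard closed graph argument (using continuity and injectivity of $\tilde G$: if $v_n \to v$ and $Tv_n \to \xi$, then $Fv_n = \tilde G Tv_n \to \tilde G\xi$, but also $Fv_n \to Fv = \tilde G Tv$, so $\xi = Tv$) shows $T$ is bounded. Lifting each $Tv$ to a representative $w \in W$ with $Gw = Fv$ and $\lVert w\rVert_W \le (\lVert T\rVert + \epsilon)\lVert v\rVert_V$ yields
\[
 \lVert F'z' \rVert_{V'} = \sup_{\lVert v\rVert\le 1} \lvert z'(Fv)\rvert = \sup_{\lVert v\rVert\le 1} \lvert (G'z')(w)\rvert \le (\lVert T\rVert+\epsilon) \lVert G'z'\rVert_{W'},
\]
so (b) holds with $c_1 = \lVert T\rVert$ after letting $\epsilon \to 0$.

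For (b) $\Rightarrow$ (c), I would define $H$ on $\ran G'$ by the formula $H(G'z') := F'z'$; the estimate in (b) shows at once that this is well-defined (since $\ker G' \subset \ker F'$) and $c_1$-Lipschitz, so continuous extension to $\overline{\ran G'}$ delivers (c) with $c_2 = c_1$, and $HG' = F'$ holds by construction. The main obstacle is (c) $\Rightarrow$ (a), which is where reflexivity of $W$ is essential. Given $v \in V$, consider the functional $\Psi$ on $\overline{\ran G'}$ defined by $\Psi(m) := (Hm)(v)$; it satisfies $\lVert \Psi \rVert \le c_2 \lVert v\rVert_V$. Extend $\Psi$ by Hahn--Banach to an element $\hat\Psi \in W''$ of the same norm, and use reflexivity of $W$ to represent $\hat\Psi$ by some $w \in W$ with $\lVert w\rVert_W \le c_2 \lVert v\rVert_V$. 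Then for every $z' \in Z'$,
\[
 z'(Fv) = (F'z')(v) = (HG'z')(v) = \hat\Psi(G'z') = (G'z')(w) = z'(Gw),
\]
so $Fv = Gw \in \ran G$ since $Z'$ separates points of $Z$. The lift $w$ also feeds back through (a) $\Rightarrow$ (b) to give $\lVert T\rVert \le c_2$, closing the loop and delivering the matching of constants $c_1 = c_2$.
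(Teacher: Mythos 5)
Your proof is correct. The paper does not prove this lemma itself --- it is quoted with a citation to Douglas \cite{Douglas-66} and Dolecki--Russell \cite{DoleckiR-77} --- so there is no internal argument to compare against; what you give is the standard duality proof underlying those references: quotient factorization of $G$ through $W/\ker G$ plus the closed graph theorem for (a)$\Rightarrow$(b), definition of $H$ on $\ran G'$ by $H(G'z')=F'z'$ with continuous extension to $\overline{\ran G'}$ for (b)$\Rightarrow$(c), and Hahn--Banach together with reflexivity of $W$ for (c)$\Rightarrow$(a), with the constants tracked correctly throughout. Two minor remarks: only reflexivity of $W$ is actually used (the hypothesis on $V$ and $Z$ is not needed in your argument), and for the matching of constants the detour through (a) is superfluous, since (c) yields (b) with $c_1=c_2$ directly from $\lVert F'z'\rVert_{V'}=\lVert HG'z'\rVert_{V'}\le c_2\lVert G'z'\rVert_{W'}$. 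Note also that in (c) the map $H$ is to be read as linear (as your construction in (b)$\Rightarrow$(c) indeed produces); this is what makes $\Psi(m)=(Hm)(v)$ a bounded linear functional in the (c)$\Rightarrow$(a) step --- in fact $HG'=F'$ forces linearity on $\ran G'$, and boundedness plus density then carries it to the closure.
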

In particular, if $V = Z = X$, $W=L_r ((0,T); U )$, $F=S_T$ and $G=\mathcal{B}^T$, statement (a) of Lemma~\ref{lemma:DR} is equivalent to the fact that the system~\eqref{eq:System} is null-controllable in time $T>0$ via $L_r ((0,T);U )$, while statement (b) of Lemma~\ref{lemma:DR} is equivalent to the fact that the system~\eqref{eq:AdjointSystem} satisfies a final state observability estimate in $L_{r'} ((0,T) ; U')$, where $1/r + 1/r' = 1$ provided $r \in (1,\infty)$. Thus, if $X$ and $U$ are reflexive and $r\in (1,\infty)$, Lemma~\ref{lemma:DR} implies that null-controllability of the system \eqref{eq:System} is equivalent to final state observability of the adjoint or dual system \eqref{eq:AdjointSystem}.
More recently, in \cite{Vieru-05} and \cite{YuLC-06} it is shown that this equivalence holds true even if $X$ is a general Banach space, $U$ a reflexive Banach space, and $r\in (1,\infty]$. 

\begin{theorem}[{\cite{Vieru-05,YuLC-06}}]
\label{Thm:Duality}
Let $X$ and $U$ be Banach spaces, $U$ reflexive, $(S_t)_{t\geq 0}$ a  $C_0$-semigroup on $X$, $-A$ the corresponding infinitesimal generator on $X$, $B\in \cL(U,X)$, $r\in (1,\infty]$, and $r' \in [1,\infty)$ with $1 / r + 1 / r' = 1$. Then the system \eqref{eq:System} is null-controllable in time $T>0$ via $L_r ((0,T);U )$ if and only if there exists $C_\mathrm{obs}>0$ such that 
\[
\forall x' \in X' \colon \quad \lVert S'_T x' \rVert_{X'} 
\leq 
C_\mathrm{obs} \lVert B' S_{(\cdot)}' x'\rVert_{L_{r'}((0,T);U')}.\]
\end{theorem}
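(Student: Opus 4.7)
The plan is to bypass the direct application of Lemma~\ref{lemma:DR} (which is unavailable since $X$ and, more importantly, $L_\infty((0,T);U)$ need not be reflexive) and prove both implications by hand using Hahn--Banach together with a closed graph argument. First I would observe that null-controllability in time $T$ via $L_r((0,T);U)$ is exactly the range inclusion $\ran S_T \subset \ran \mathcal{B}^T$, and a direct computation with \eqref{eq:controllability-map} shows that for all $u \in L_r((0,T);U)$ and $x' \in X'$,
\[
\langle \mathcal{B}^T u, x'\rangle_{X,X'} = \int_0^T \langle u(\tau), B' S'_{T-\tau} x'\rangle_{U,U'}\, \drm\tau,
\]
so $(\mathcal{B}^T)' x'$ is represented by the function $\tau \mapsto B' S'_{T-\tau} x'$ in $L_{r'}((0,T);U')$ and its norm coincides with $\lVert B' S'_{(\cdot)} x'\rVert_{L_{r'}((0,T);U')}$ after the change of variables $\tau \mapsto T-\tau$. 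Reflexivity of $U$ supplies the Bochner duality $L_{r'}((0,T);U')^* \cong L_r((0,T);U)$ for every $r \in (1,\infty]$ (for $r<\infty$ via the Radon--Nikodym property inherited by the reflexive space $U$; for $r=\infty$ this is the statement $L_1^* = L_\infty$ under the same hypothesis), and this is the sole place where reflexivity enters.

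For the direction observability $\Rightarrow$ null-controllability, fix $x_0 \in X$ and define a linear functional on $\ran(\mathcal{B}^T)' \subset L_{r'}((0,T);U')$ by
\[
\Phi\bigl((\mathcal{B}^T)' x'\bigr) := -\langle S_T x_0, x'\rangle_{X,X'}.
\]
The observability estimate makes $\Phi$ well-defined and bounded by $C_\mathrm{obs}\lVert x_0\rVert$: if $(\mathcal{B}^T)' x' = 0$ then $S'_T x' = 0$ and hence $\langle S_T x_0, x'\rangle = \langle x_0, S'_T x'\rangle = 0$. Extending $\Phi$ via Hahn--Banach to all of $L_{r'}((0,T);U')$ and invoking the duality above yields a representing element $u \in L_r((0,T);U)$. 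Testing against arbitrary $x' \in X'$ gives $\langle \mathcal{B}^T u + S_T x_0, x'\rangle = 0$, hence $\mathcal{B}^T u = -S_T x_0$.

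For the reverse direction I would use the closed graph theorem. Since $\ker \mathcal{B}^T$ is closed, the quotient $Q := L_r((0,T);U)/\ker \mathcal{B}^T$ is a Banach space and $\mathcal{B}^T$ factors through an injection $\tilde{\mathcal{B}}^T \colon Q \to X$. Null-controllability provides that the map $\Psi \colon X \to Q$, $\Psi(x) := -(\tilde{\mathcal{B}}^T)^{-1}(S_T x)$, is well-defined on all of $X$; it has closed graph, for if $x_n \to x$ in $X$ and $\Psi(x_n) \to [v]$ in $Q$, then continuity of $\tilde{\mathcal{B}}^T$ and $S_T$ gives $\tilde{\mathcal{B}}^T [v] = -S_T x = \tilde{\mathcal{B}}^T \Psi(x)$, and injectivity of $\tilde{\mathcal{B}}^T$ forces $[v] = \Psi(x)$. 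Thus $\Psi$ is bounded with some norm $c$, so for every $x_0 \in X$ and every $\epsilon>0$ there is $u \in L_r((0,T);U)$ with $\mathcal{B}^T u = -S_T x_0$ and $\lVert u\rVert \leq (c+\epsilon)\lVert x_0\rVert$. Substituting,
\[
\lvert\langle x_0, S'_T x'\rangle\rvert = \lvert\langle S_T x_0, x'\rangle\rvert = \lvert \langle u, (\mathcal{B}^T)' x'\rangle\rvert \leq (c+\epsilon)\lVert x_0\rVert \lVert (\mathcal{B}^T)' x'\rVert_{L_{r'}},
\]
and taking the supremum over $\lVert x_0\rVert \leq 1$ yields the observability estimate.

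The hard part is the case $r=\infty$, where $L_\infty((0,T);U)$ is non-reflexive and $(L_\infty)^*$ is strictly larger than $L_1$, so that Lemma~\ref{lemma:DR} does not apply and the usual identification $L_r^* = L_{r'}$ fails. My workaround is to run the Hahn--Banach step inside $L_1((0,T);U') = L_{r'}((0,T);U')$ from the outset, since the adjoint $(\mathcal{B}^T)' x'$ always lives in this smaller space, and then rely on the one-sided duality $L_1((0,T);U')^* \cong L_\infty((0,T);U)$, which is available precisely when $U$ is reflexive. The closed graph direction, by contrast, needs no reflexivity at all.
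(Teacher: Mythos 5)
Your argument is correct, but it takes a genuinely different route from the paper: the paper does not prove Theorem~\ref{Thm:Duality} itself --- it quotes it from \cite{Vieru-05,YuLC-06}, and for reflexive $X$ and $r\in(1,\infty)$ it points to the abstract range-inclusion result of Douglas and Dolecki--Russell (Lemma~\ref{lemma:DR}), which indeed cannot be invoked when $X$ or $L_r((0,T);U)$ fails to be reflexive. What you do instead is re-prove the two relevant implications of that lemma by hand in the concrete situation $F=S_T$, $G=\mathcal{B}^T$: your closed-graph/quotient argument is the standard proof of ``range inclusion implies adjoint estimate'' and, as you note, uses no reflexivity at all, while your Hahn--Banach step replaces ``adjoint estimate implies range inclusion'', with reflexivity of $U$ entering only through the vector-valued dualities $L_{r'}((0,T);U')'\cong L_r((0,T);U'')$ for $r'\in[1,\infty)$, valid because $U''\cong U$ has the Radon--Nikod\'ym property; your observation that for $r=\infty$ one only ever needs $L_1((0,T);U')'\cong L_\infty((0,T);U)$, never the false identification of $L_\infty'$ with $L_1$, is exactly what makes the endpoint case work and matches how the cited references treat it. Your approach buys a self-contained proof and, as a by-product of the norm-preserving Hahn--Banach extension, the bound $\lVert u\rVert_{L_r((0,T);U)}\le C_{\mathrm{obs}}\lVert x_0\rVert_X$ on the control, which the paper only recovers later under the additional extension hypothesis of Lemma~\ref{lemma:CT<COBS}. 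One small point you should make explicit: to regard $(\mathcal{B}^T)'x'$ as an element of the Bochner space $L_{r'}((0,T);U')$ you need $t\mapsto B'S_t'x'$ to be strongly measurable; since $(S_t')_{t\geq 0}$ is in general only weak$^*$-continuous, this follows from the weak$^*$-continuity of $t\mapsto B'S_t'x'$ together with reflexivity of $U$ (so that the weak$^*$ and weak topologies on $U'$ agree) and Pettis' measurability theorem --- a second, hidden, use of the reflexivity hypothesis, which however does not affect the validity of your argument.
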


Note that $-A'$ in general does not generate a $C_0$-semigroup on $X'$ but is merely the weak$^*$ generator of the weak$^*$-continuous semigroup $(S_t')_{t \geq 0}$ on $X'$ given by $S_t' := (S_t)'$ for all $t\geq 0$.
However, if $X$ is reflexive, then $(S_t')_{t \geq 0}$ is strongly continuous and $-A'$ is the infinitesimal generator of $(S_t')_{t \geq 0}$. If we assume that $(S_t')_{t \geq 0}$ is strongly continuous, we can combine Theorem~\ref{thm:spectral+diss-obs} and Theorem~\ref{Thm:Duality} and obtain sufficient conditions for null-controllability of the system~\eqref{eq:System}.
\begin{theorem} Let $X, U$ be Banach spaces, $U$ reflexive, $(S_t)_{t\geq 0}$ a $C_0$-semigroup on $X$, $-A$ the corresponding infinitesimal generator on $X$, $B\in \cL(U,X)$, and assume that $(S'_t)_{t\geq0}$ is strongly continuous. 
Let further $\lambda^*\geq 0$ and $(P'_\lambda)_{\lambda>\lambda^*}$ be a family of bounded linear operators in $X'$, $r \in (1,\infty]$, $d_0,d_1,d_3,\gamma_1,\gamma_2,\gamma_3,T > 0$ with $\gamma_1 < \gamma_2$, $d_2\geq 1$, and assume that
\begin{equation*}
\forall x'\in X' \ \forall \lambda > \lambda^* \colon \quad \lVert P'_\lambda x' \rVert_{ X' } \le d_0 {\euler}^{d_1 \lambda^{\gamma_1}} \lVert B'  P'_\lambda x' \rVert_{U' }
\end{equation*}
and 
\begin{equation*}
\forall x'\in X' \ \forall \lambda > \lambda^* \ \forall t\in (0,T/2] \colon \quad \lVert (\id-P'_\lambda) S'_t x' \rVert_{X'} \le d_2 {\euler}^{-d_3 \lambda^{\gamma_2} t^{\gamma_3}} \lVert x' \rVert_{X'} .
\end{equation*}
Then the system \eqref{eq:System} is null-controllable in time $T$ via $L_r ((0,T);U)$.
\end{theorem}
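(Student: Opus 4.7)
The plan is to combine the abstract observability result, Theorem~\ref{thm:spectral+diss-obs}, applied to the dual semigroup, with the duality principle in Theorem~\ref{Thm:Duality}. Since the hypotheses on $(P'_\lambda)_{\lambda > \lambda^*}$ mirror exactly the uncertainty relation \eqref{eq:ass:uncertainty} and the dissipation estimate \eqref{eq:ass:dissipation}, but phrased on $X'$ with observation operator $B'$, the proof should be essentially a direct concatenation of these two results.

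The first step is to set up Theorem~\ref{thm:spectral+diss-obs} on the dual side. Because $(S'_t)_{t\geq 0}$ is assumed to be strongly continuous, it is a $C_0$-semigroup on $X'$ with infinitesimal generator $-A'$; moreover any $C_0$-semigroup admits constants $M\geq 1$ and $\omega\in \R$ with $\lVert S'_t\rVert \leq M \euler^{\omega t}$ for all $t \geq 0$. Taking the Banach space $X$ in Theorem~\ref{thm:spectral+diss-obs} to be $X'$, the Banach space $Y$ to be $U'$, the bounded observation operator $C$ to be $B'\in \cL(X',U')$, the semigroup to be $(S'_t)_{t\geq 0}$, and the family of bounded linear operators to be $(P'_\lambda)_{\lambda > \lambda^*}$, the two hypotheses of the present theorem become literally assumptions \eqref{eq:ass:uncertainty} and \eqref{eq:ass:dissipation}.

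The second step is to invoke Theorem~\ref{thm:spectral+diss-obs}. It yields, for any $r' \in [1,\infty]$, the existence of a constant $C_{\mathrm{obs}} > 0$ such that
\[
 \forall x' \in X' \colon \quad \lVert S'_T x' \rVert_{X'} \leq C_{\mathrm{obs}} \lVert B' S'_{(\cdot)} x' \rVert_{L_{r'}((0,T) ; U')}.
\]
Choosing $r' \in [1,\infty)$ as the H\"older conjugate of the given $r \in (1,\infty]$, this is precisely the final state observability estimate for the dual system \eqref{eq:AdjointSystem} required by Theorem~\ref{Thm:Duality}.

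The third and final step is to apply Theorem~\ref{Thm:Duality}, whose hypotheses (Banach space $X$, reflexive Banach space $U$, $C_0$-semigroup $(S_t)_{t\geq 0}$ on $X$, $B\in \cL(U,X)$, $r \in (1,\infty]$) are satisfied by assumption; this yields null-controllability of \eqref{eq:System} in time $T$ via $L_r((0,T);U)$. There is no serious obstacle here, since the two ingredients have already been established; the only point to double-check is that Theorem~\ref{thm:spectral+diss-obs} does not implicitly require the observation space $Y$ to coincide with $U$ or any additional reflexivity, and that all parameter ranges (in particular $r'\in [1,\infty]$ in Theorem~\ref{thm:spectral+diss-obs} versus $r \in (1,\infty]$ needed in Theorem~\ref{Thm:Duality}) match up through the identification $1/r + 1/r' = 1$.
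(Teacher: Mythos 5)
Your proposal is correct and follows exactly the route the paper intends: the paper offers no separate argument beyond stating that the theorem is obtained by applying Theorem~\ref{thm:spectral+diss-obs} to the dual semigroup $(S'_t)_{t\geq 0}$ on $X'$ with observation operator $B'$ and then invoking the duality result Theorem~\ref{Thm:Duality} with $1/r+1/r'=1$. Your added checks (strong continuity of $(S'_t)_{t\geq0}$ making it a $C_0$-semigroup with some $M,\omega$, and the matching of the exponent ranges $r'\in[1,\infty)$ versus $r\in(1,\infty]$) are precisely the points that make the concatenation work.
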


Combining Theorem~\ref{Thm:Duality} with Theorems~\ref{Thm:Observability_Elliptic_Operator_Rd} and \ref{Thm:Thick_set_Rd} we obtain a sharp geometric condition on null-controllability for linear systems governed by strongly elliptic operators with interior control.
For $\thickset \subset \R^d$ measurable we denote by $\1_\thickset' \in \cL (L_p (\thickset) , L_p (\R^d))$ the canonical embedding, i.e., $\1_\thickset' f  = f$ on $\thickset$ and $\1_\thickset' f = 0$ on $\R^d\setminus\thickset$. 
\begin{theorem} \label{theorem:null<->thick}
  Let $1<p<\infty$, $a \colon \R^d \to \C$ a homogeneous strongly elliptic polynomial in $\R^d$ of degree $m\geq 2$, $A_p$ the associated elliptic operator in $L_p(\R^d)$, $\thickset\subset \R^d$ measurable, $r\in (1,\infty]$, and $T>0$.
  Then the system 
	\begin{align*}
	\dot{x}(t) & = -A_p x(t) + \1_\thickset' u(t), \quad t\in (0,T], \quad x(0) = x_0 \in L_p(\R^d)
\end{align*}
is null-controllable in time $T$ via $L_{r} ((0,T);L_p(\R^d))$ if and only if $\thickset$ is a thick set.
\end{theorem}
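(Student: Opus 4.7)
The plan is to derive Theorem~\ref{theorem:null<->thick} as a duality consequence of Theorem~\ref{Thm:Duality}, combined with the observability results of Section~\ref{sec:L_p(Rd)}.

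First I would set up the duality. Let $X = L_p(\R^d)$, $U = L_p(\thickset)$, and $B = \1_\thickset' \in \cL(U,X)$. Since $1 < p < \infty$, both spaces are reflexive, with duals identified canonically with $L_{p'}(\R^d)$ and $L_{p'}(\thickset)$ respectively, where $1/p + 1/p' = 1$. A direct computation with the duality pairing $\langle \1_\thickset' f , g \rangle = \int_\thickset f \, g$ gives $B' = \1_\thickset \in \cL(L_{p'}(\R^d), L_{p'}(\thickset))$, i.e.\ the restriction operator. Since $m$ is even (forced by strong ellipticity), integration by parts on Schwartz functions yields $\langle Af, g \rangle = \langle f, Ag \rangle$ for all $f, g \in \mathcal{S}(\R^d)$, which after passing to closures identifies the dual of $A_p$ with $A_{p'}$, the elliptic operator on $L_{p'}(\R^d)$ associated with the \emph{same} symbol $a$. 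By reflexivity of $L_p(\R^d)$, the dual semigroup $(S_t')_{t\geq 0}$ is strongly continuous on $L_{p'}(\R^d)$ and coincides with the bounded $C_0$-semigroup generated by $-A_{p'}$.

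With these identifications in place, Theorem~\ref{Thm:Duality} asserts that null-controllability of the given system in time $T$ via $L_r((0,T); U)$ is equivalent to a final state observability estimate in $L_{r'}((0,T); L_{p'}(\thickset))$ for the dual elliptic equation on $L_{p'}(\R^d)$ with observation operator $\1_\thickset$, where $1/r + 1/r' = 1$ and $r' \in [1,\infty)$. For the sufficiency, if $\thickset$ is thick, Theorem~\ref{Thm:Observability_Elliptic_Operator_Rd} applied with $(p, r)$ replaced by $(p', r')$ provides exactly this estimate, and Theorem~\ref{Thm:Duality} then yields null-controllability. For the necessity, null-controllability delivers the observability estimate on the dual side via Theorem~\ref{Thm:Duality}, and Theorem~\ref{Thm:Thick_set_Rd} (again with $p', r'$) forces $\thickset$ to be thick.

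The argument is essentially duality bookkeeping. The main (and essentially only) technical point is verifying $(A_p)' = A_{p'}$ together with strong continuity of $(S_t')_{t\geq 0}$; both follow from reflexivity of $L_p(\R^d)$ and the fact that $m$ is even. Once these are settled, the result reduces entirely to Theorems~\ref{Thm:Observability_Elliptic_Operator_Rd} and~\ref{Thm:Thick_set_Rd} via Theorem~\ref{Thm:Duality}.
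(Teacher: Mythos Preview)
Your proposal is correct and follows essentially the same route as the paper's proof: identify the dual data $(A_p)' = A_{p'}$ (using that $m$ is even), $B' = \1_\thickset$, note that reflexivity of $L_p(\R^d)$ makes $(S_t')_{t\geq 0}$ a $C_0$-semigroup generated by $-A_{p'}$, and then invoke Theorem~\ref{Thm:Duality} together with Theorems~\ref{Thm:Observability_Elliptic_Operator_Rd} and~\ref{Thm:Thick_set_Rd} applied on the dual side with exponents $(p',r')$.
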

\begin{proof}
Let $1<p'<\infty$ be such that $1 / p + 1 / p' = 1$. Note that $L_p(\R^d)' \cong L_{p'}(\R^d)$, $(A_p)' = A_{p'}$ (note that $m$ is even), and $A_{p'}$ is the generator of the bounded $C_0$-semigroup $(S_t')_{t\geq 0}$.
If we set $B = \1_\thickset' \in \cL (L_p(\thickset),L_p(\R^d))$ we have for the dual operator $B' = \1_\thickset \in \cL (L_{p'}(\R^d),L_{p'}(\thickset) )$, i.e., $B'$ is the restriction operator of a function $x\in L_{p'}(\R^d)$ on $\thickset$.
Hence, combining Theorem~\ref{Thm:Duality} with Theorems~\ref{Thm:Observability_Elliptic_Operator_Rd} and \ref{Thm:Thick_set_Rd} for the dual system we obtain the assertion.
\end{proof}

We now turn to the discussion of the control costs. For $T>0$ we call the quantity 
\begin{equation*}
C_T := \sup_{\stackrel{x_0\in X}{\lVert x_0 \rVert_X=1}} \inf \left\{ \lVert u \rVert_{L_r((0,T);U)} \colon u\in L_r ((0,T);U), \ S_T x_0 + \mathcal{B}^T u = 0 \right\}
\end{equation*}
the \emph{control cost in time $T$ via $L_r ((0,T);U)$} of the system \eqref{eq:System}. If $X$ and $U$ are Hilbert spaces and $r = 2$, it is well known that the control cost $C_T$ equals the smallest constant $C_{\mathrm{obs}}$ such that the system~\eqref{eq:AdjointSystem} satisfies a final state observability estimate. This fact is a direct consequence of Lemma~\ref{lemma:DR}.
\par
If $U$ is not a Hilbert space, or $r \not = 2$, the construction above does not apply directly, since it is not clear how to extend the operator $H$ to the whole space by keeping its relevant properties.  It is an open question if control costs can be estimated by the observability constant in the general setting. Under some additional assumption we can formulate the following lemma.
\begin{lemma} \label{lemma:CT<COBS}
 Let $X$ and $U$ be Banach spaces, $(S_t)_{t \geq 0}$ a $C_0$-semigroup on $X$, $-A$ the corresponding infinitesimal generator on $X$, $B \in \cL (U,X)$, $T > 0$, $r,r' \in (1,\infty)$ with $1/r' + 1/r = 1$, and $C_\mathrm{obs}>0$. Assume that the system~\eqref{eq:AdjointSystem} satisfies the final state observability estimate 
 \begin{equation} \label{eq:Dual-ObsEstimate}
\forall x' \in X' \colon \quad \lVert S'_T x' \rVert_{X'} 
\leq 
C_\mathrm{obs} \lVert B' S_{(\cdot)}' x'\rVert_{L_{r'}((0,T);U')}.
\end{equation}
Then the system \eqref{eq:System} is null-controllable in time $T$ via $L_r ((0,T) ; U)$. 
Moreover, there exists 
\[
 H\colon \overline{\ran {\mathcal{B}^T}'} \to X' \quad\text{such that}\quad
 H{\mathcal{B}^T}' = S_T' \quad \text{and} \quad
 \lVert H u' \rVert_{X'} \le C_\mathrm{obs} \lVert u' \rVert_{L_{r'} ((0,T);U')}  
\]
for all $u' \in \overline{\ran {\mathcal{B}^T}'}$, where $\mathcal{B}^T \colon L_r ((0,T);U) \to X$ is as in \eqref{eq:controllability-map}.
Suppose further that there is an extension $\tilde{H}\colon L_{r'}((0,T);U')\to X'$ of $H$ with $\lVert \tilde{H} u' \rVert \le C_\mathrm{obs} \lVert u'\rVert_{L_{r'}((0,T);U')}$ for all $u' \in L_{r'}((0,T);U')$. Then the control cost in time $T$ via $L_{r} ((0,T);U)$ of the system \eqref{eq:System} satisfies $C_T \le C_\mathrm{obs}$.
\end{lemma}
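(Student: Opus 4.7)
I start by translating the observability hypothesis into an operator inequality. Duhamel's formula together with duality gives $(\mathcal{B}^T)'x' = B' S'_{T-(\cdot)}x' \in L_{r'}((0,T);U')$, so \eqref{eq:Dual-ObsEstimate} reads $\lVert S_T'x'\rVert_{X'} \le C_\mathrm{obs}\lVert (\mathcal{B}^T)'x'\rVert_{L_{r'}((0,T);U')}$. In particular $\ker (\mathcal{B}^T)' \subseteq \ker S_T'$, so the prescription $H_0(\mathcal{B}^T)'x' := S_T'x'$ unambiguously defines a linear map on $\ran (\mathcal{B}^T)'$ with operator norm at most $C_\mathrm{obs}$. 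Extending by density yields the claimed $H\colon \overline{\ran (\mathcal{B}^T)'}\to X'$ with $H\circ(\mathcal{B}^T)' = S_T'$ and the stated norm bound.

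\textbf{Null-controllability.} For each $x_0 \in X$ introduce the linear functional $\Lambda_{x_0}(u') := -\langle x_0, Hu'\rangle_{X,X'}$ on $\overline{\ran (\mathcal{B}^T)'}$; by the previous step it satisfies $\Lambda_{x_0}((\mathcal{B}^T)'x') = -\langle S_T x_0, x'\rangle$ and $\lVert \Lambda_{x_0}\rVert \le C_\mathrm{obs}\lVert x_0\rVert$. Hahn--Banach extends $\Lambda_{x_0}$ to $\tilde\Lambda_{x_0} \in (L_{r'}((0,T);U'))'$ preserving the norm. Because $r \in (1,\infty)$, the canonical isometric embedding
\[
J\colon L_r((0,T);U) \to (L_{r'}((0,T);U'))', \qquad u \mapsto \Bigl( v' \mapsto \int_0^T \langle u(t), v'(t)\rangle_{U,U'}\drm t\Bigr),
\]
is an isomorphism under a Radon--Nikodym-type assumption on $U$ (met, e.g., when $U$ is reflexive). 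Setting $u := J^{-1}(\tilde\Lambda_{x_0}) \in L_r((0,T);U)$ and testing against an arbitrary $x' \in X'$,
\[
\langle \mathcal{B}^T u, x'\rangle = \langle u, (\mathcal{B}^T)'x'\rangle = \tilde\Lambda_{x_0}((\mathcal{B}^T)'x') = -\langle S_T x_0, x'\rangle,
\]
so $\mathcal{B}^T u = -S_T x_0$, which proves null-controllability in time $T$ via $L_r((0,T);U)$.

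\textbf{Control-cost bound and main difficulty.} If the extension $\tilde H$ is at hand, I bypass the non-canonical Hahn--Banach step by the canonical formula $\tilde\Lambda_{x_0}(v') := -\langle x_0, \tilde H v'\rangle_{X,X'}$ defined on all of $L_{r'}((0,T);U')$; the hypothesis on $\tilde H$ gives $\lVert \tilde\Lambda_{x_0}\rVert \le C_\mathrm{obs}\lVert x_0\rVert_X$. Lifting via $u := J^{-1}(\tilde\Lambda_{x_0})$ as above yields a control with $\lVert u\rVert_{L_r}\le C_\mathrm{obs}\lVert x_0\rVert_X$ driving $x_0$ to zero at time $T$; taking the supremum over $\lVert x_0\rVert_X = 1$ gives $C_T \le C_\mathrm{obs}$. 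The main technical obstacle throughout is the Bochner-space duality identification used through $J$: in a general Banach space $U$ the map $J$ is only an isometric embedding, and identifying $(L_{r'}((0,T);U'))'$ with $L_r((0,T);U)$ requires a Radon--Nikodym-type hypothesis. The role of the $\tilde H$ assumption is to make the extension of $\Lambda_{x_0}$ both canonical and uniformly norm-controlled in $x_0$, thereby converting the pointwise construction of controls into the uniform cost bound, but it does not in itself resolve the Bochner duality identification.
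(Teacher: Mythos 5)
Your construction of $H$ is correct and is in substance a self-contained proof of the implication (b) $\Rightarrow$ (c) of Lemma~\ref{lemma:DR}, which the paper simply cites with $V=Z=X$, $W=L_r((0,T);U)$, $F=S_T$, $G=\mathcal{B}^T$; you use correctly that $({\mathcal{B}^T}'x')(t)=B'S'_{T-t}x'$ has the same $L_{r'}$-norm as $B'S'_{(\cdot)}x'$, so \eqref{eq:Dual-ObsEstimate} gives both well-definedness and the bound $\lVert H\rVert\le C_{\mathrm{obs}}$ on $\overline{\ran{\mathcal{B}^T}'}$. Your control-cost argument is also essentially the paper's: the paper chooses $u(t)=(-\tilde H'x_0)(T-t)$ and pairs against ${\mathcal{B}^T}'x'$, which is exactly your functional $\tilde\Lambda_{x_0}(v')=-\langle x_0,\tilde H v'\rangle$ read through the canonical embedding $J$, followed by the same duality computation showing $\mathcal{B}^Tu=-S_Tx_0$ and $\lVert u\rVert_{L_r((0,T);U)}\le C_{\mathrm{obs}}\lVert x_0\rVert_X$.

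The genuine gap is in your proof of the first assertion (null-controllability): producing $u=J^{-1}(\tilde\Lambda_{x_0})$ requires $J$ to be surjective, i.e.\ the identification $(L_{r'}((0,T);U'))'\cong L_r((0,T);U)$, which holds when $U$ is reflexive (or under a Radon--Nikod\'ym-type hypothesis) but not for a general Banach space $U$ as in the statement; you flag this yourself and leave it unresolved, so as written the proposal proves the lemma only under an extra hypothesis on $U$. Note that having $H$ with $H{\mathcal{B}^T}'=S_T'$ does not by itself yield $\ran S_T\subset\ran\mathcal{B}^T$: from \eqref{eq:Dual-ObsEstimate} a bipolar argument only gives that $S_T$ maps the closed unit ball of $X$ into $C_{\mathrm{obs}}$ times the \emph{closure} of $\mathcal{B}^T$ applied to the unit ball of $L_r((0,T);U)$, and removing the closure needs weak compactness of that image, e.g.\ reflexivity of $L_r((0,T);U)$, which for $1<r<\infty$ is equivalent to reflexivity of $U$. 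The paper avoids this by invoking the implication (b) $\Rightarrow$ (a) of Lemma~\ref{lemma:DR}; but that lemma is stated for reflexive spaces, and the paper's own cost computation likewise treats $\tilde H'x_0$ as an element of $L_r((0,T);U)$, so the obstruction you identified is real and is absorbed into the citation rather than removed (all applications in the paper, cf.\ Theorem~\ref{Thm:Duality}, have $U$ reflexive). To complete your argument you should either add a reflexivity/RNP assumption on $U$ or replace the Hahn--Banach/$J^{-1}$ step by the weak-compactness range-inclusion argument just described.
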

\begin{proof}[Proof of Lemma~\ref{lemma:CT<COBS}]
Equation~\eqref{eq:Dual-ObsEstimate} is equivalent to statement (b) of Lemma~\ref{lemma:DR} with $V=Z=X$, $W = L_r ((0,T);U)$, $F = S_T$, and $G = \mathcal{B}^T \in \cL (W , X)$ with $\mathcal{B}^T$ as in \eqref{eq:controllability-map}. The implication (b) $\Rightarrow$ (a) of Lemma~\ref{lemma:DR} implies that $\ran(S_T)\subset \ran(\mathcal{B}^T)$, i.e., null-controllability of the system \eqref{eq:System}. The implication (b) $\Rightarrow$ (c) of Lemma~\ref{lemma:DR} ensures the existence of the operator $H$ with the desired properties, which proves the first assertion.
\par
The dual operator of $\mathcal{B}^T$ is given by $({\mathcal{B}^T}' x') (t) = B' S_t' x'$ for $x'\in X'$. For an arbitrary initial state $x_0 \in X$ we choose the control function $u \in L_r ((0,T);U)$, $u (t) = (-\tilde{H}' x_0)(T-t)$, with $\tilde H$ as in the hypothesis of the lemma. Since $\tilde{H}{\mathcal{B}^T}' = S_T'$ by assumption, we obtain for all $x' \in X'$
\begin{align*}
\langle S_T x_0, x' \rangle_{X,X'} &= \langle \tilde{H}' x_0, {\mathcal{B}^T}'x' \rangle_{L_{r}((0,T);U),L_{r'}((0,T);U')} \\ &= -\int_0^T \langle u(T-t), B'S_t'x' \rangle_{U,U'} \drm t \\
&= -\int_0^T \langle S_t B u(T-t),x' \rangle_{X,X'} ~ \drm t = -\langle \mathcal{B}^T u , x' \rangle_{X,X'} .
\end{align*}
Thus, the solution of \eqref{eq:System} satisfies $x (T) = S_T x_0 + \mathcal{B}^T u = 0$. For the norm of the control function we have by assumption on $\tilde H$
\[
 \lVert u \rVert_{L_r ((0,T);U)} \leq \lVert \tilde{H} \rVert \lVert x_0 \rVert_X \leq C_{\mathrm{obs}} \lVert x_0 \rVert_X .
\]
This shows that the control cost in time $T$ via $L_r ((0,T);U)$ of the system \eqref{eq:System} satisfies $C_T \leq C_{\mathrm{obs}}$.
\end{proof}

From Lemma~\ref{lemma:CT<COBS} and Theorem~\ref{Thm:Observability_Elliptic_Operator_Rd} we obtain the following corollary. It complements Theorem~\ref{theorem:null<->thick} and provides an explicit upper bound on the control cost for elliptic operators $A$ and interior control on thick sets.
\begin{corollary}
  \label{Thm:Cost_Elliptic_Operator_Rd}
  Let $p,p',r, r'\in (1,\infty)$ such that $1/p' + 1/p = 1$ and $1/r' + 1/r = 1$, $a\colon\R^d \to \C$ a homogeneous strongly elliptic polynomial in $\R^d$ of degree $m\geq 2$, $A_p$ the associated elliptic operator in $L_p(\R^d)$, $(S_t)_{t\geq 0}$ the bounded $C_0$-semigroup on $L_p (\R^d)$ generated by $-A_p$, $\thickset \subset \R^d$ a $(\rho,L)$-thick set, and $T > 0$. Then the system 
  \begin{align} \label{eq:system:nullcontrol:elliptic}
	\dot{x}(t) & = -A_p x(t) + \1_\thickset' u(t), \quad t \in (0,T], \quad x(0) = x_0 \in L_p(\R^d)
  \end{align}
  is null-controllable in time $T$ via $L_{r} ((0,T);L_p(\thickset) )$. Moreover, there exists 
  \[
   H\colon \overline{\ran {\mathcal{B}^T}'} \to L_{p'}(\R^d)
   \quad\text{such that}\quad
   H{\mathcal{B}^T}' = S_T' 
  \]
  and $\lVert Hu' \rVert_{L_{p'}(\R^d)} \leq C_\mathrm{obs} \lVert u' \rVert_{L_{r'}((0,T);L_{p'}(\thickset))}$ for all $u' \in \overline{\ran {\mathcal{B}^T}'}$, where as in \eqref{eq:controllability-map} we have $\mathcal{B}^T \colon L_r ((0,T);L_p(\thickset)) \to L_p (\R^d)$ with $B = \1_\thickset'$, 
	  \begin{align*}
C_{\mathrm{obs}}
& = \frac{D_1 M^{16}}{T^{1/r'}} \left( \frac{K^d}{\rho} \right)^{D_2} \exp \left(\frac{D_3 (\lvert L \rvert_1 \ln (K^d / \rho))^{m/(m-1)}}{(c T)^{\frac{1}{m-1}}} \right), 
\end{align*}
where $K\geq 1$ is a universal constant, $D_1,D_2 \geq 1$ depending on $d$, $D_3 \geq 1$ depending on $d$ and $p'$, $M = \sup_{t \geq 0} \lVert S'_t \rVert$, and where $c > 0$ is such that $\re a (\xi) \geq c \lvert \xi \rvert^m$ for all $\xi \in \R^d$.
 Suppose further that there is an extension $\tilde{H}\colon L_{r'}((0,T);L_{p'}(\thickset))\to L_{p'}(\R^d)$ of $H$ with $\lVert \tilde{H}u' \rVert_{L_{p'}(\R^d)} \le C_\mathrm{obs} \lVert u' \rVert_{L_{r'}((0,T);L_{p'}(\thickset))}$ for all $u' \in L_{r'}((0,T);L_{p'}(\thickset))$. Then the control cost in time $T$ via $L_r ((0,T);L_p (\thickset))$ of the system \eqref{eq:system:nullcontrol:elliptic} satisfies $C_T \le C_{\mathrm{obs}}$.
\end{corollary}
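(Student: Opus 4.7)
The plan is to combine the observability result of Theorem~\ref{Thm:Observability_Elliptic_Operator_Rd} (applied to the dual system) with the abstract duality Lemma~\ref{lemma:CT<COBS}. First I set up the dual identifications: with $X = L_p(\R^d)$, $U = L_p(\thickset)$, and $B = \1_\thickset' \in \cL(U,X)$, we have $X' \cong L_{p'}(\R^d)$, $U' \cong L_{p'}(\thickset)$, and the dual operator $B' \in \cL(X',U')$ is the restriction $\1_\thickset$ onto $\thickset$. Since $m$ is even, the adjoint symbol $\bar a$ of $a$ is again a homogeneous strongly elliptic polynomial of degree $m$ with $\re \bar a (\xi) = \re a(\xi) \ge c |\xi|^m$, so $(A_p)' = A_{p'}$ and $(S_t')_{t\geq 0}$ is precisely the bounded $C_0$-semigroup on $L_{p'}(\R^d)$ generated by $-A_{p'}$, with $\sup_{t\ge 0} \lVert S_t' \rVert = M$.

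Next I apply Theorem~\ref{Thm:Observability_Elliptic_Operator_Rd} to this dual setup, i.e., with $p$ replaced by $p'$ and $r$ replaced by $r'$. This yields the final state observability estimate
\[
\lVert S_T' x' \rVert_{L_{p'}(\R^d)} \leq C_\mathrm{obs} \lVert \1_\thickset S_{(\cdot)}' x' \rVert_{L_{r'}((0,T); L_{p'}(\thickset))}
\]
for all $x' \in L_{p'}(\R^d)$, with exactly the constant $C_\mathrm{obs}$ of the corollary (the factor $T^{-1/r'}$ comes from using $r'$ in place of $r$, and the dependence $D_3 = D_3(d,p')$ reflects the replacement $p \to p'$).

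Finally I invoke Lemma~\ref{lemma:CT<COBS} with the above choice of $X, U, B$ and with $r, r'$ as in the hypotheses. Hypothesis~\eqref{eq:Dual-ObsEstimate} of the lemma is precisely the observability estimate just established, so the lemma provides null-controllability of \eqref{eq:system:nullcontrol:elliptic} in time $T$ via $L_r((0,T); L_p(\thickset))$ together with an operator $H\colon \overline{\ran {\mathcal{B}^T}'} \to L_{p'}(\R^d)$ satisfying $H {\mathcal{B}^T}' = S_T'$ and the desired norm bound. Under the additional hypothesis on the existence of the extension $\tilde H$, the second part of Lemma~\ref{lemma:CT<COBS} then delivers $C_T \le C_\mathrm{obs}$.

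There is no substantive obstacle here; the corollary is a clean concatenation of Theorem~\ref{Thm:Observability_Elliptic_Operator_Rd} and Lemma~\ref{lemma:CT<COBS}. The only point demanding a little care is the identification $(A_p)' = A_{p'}$ together with the invariance of the ellipticity constant $c$ under passage to the adjoint symbol, which is where the evenness of $m$ enters.
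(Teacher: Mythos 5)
Your proposal is correct and follows essentially the paper's intended argument: the corollary is obtained by applying Theorem~\ref{Thm:Observability_Elliptic_Operator_Rd} (with $p'$, $r'$) to the dual system, using the identifications $X'\cong L_{p'}(\R^d)$, $B'=\1_\thickset$, $(A_p)'=A_{p'}$ (where evenness of $m$ enters, exactly as in the proof of Theorem~\ref{theorem:null<->thick}), and then invoking Lemma~\ref{lemma:CT<COBS} for null-controllability, the operator $H$, and the bound $C_T\le C_{\mathrm{obs}}$ under the extension hypothesis.
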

\begin{remark}
  In general, it may be difficult to show the existence of an extension $\tilde{H}$ of $H$ as in Lemma~\ref{lemma:CT<COBS} and Corollary~\ref{Thm:Cost_Elliptic_Operator_Rd}.
  However, if $U$ is a Hilbert space (or $p = 2$) and $r = 2$, the existence is trivial, since we can choose $\tilde H = HP$ with a suitable orthogonal projection $P$.
\end{remark}
\section*{Acknowledgment}
The authors thank Clemens Bombach for valuable comments which helped to significantly improve an earlier version of this manuscript.
%
%
%

\begin{thebibliography}{NTTV20b}

\bibitem[AEWZ14]{ApraizEWZ-14}
J.~Apraiz, L.~Escauriaza, G.~Wang, and C.~Zhang.
\newblock Observability inequalities and measurable sets.
\newblock {\em J. Eur. Math. Soc. (JEMS)}, 16(11):2433--2475, 2014.

\bibitem[Bar14]{Barbu-14}
V.~Barbu.
\newblock Exact null internal controllability for the heat equation on
  unbounded convex domains.
\newblock {\em ESAIM Control Optim. Calc. Var.}, 20(1):222--235, 2014.

\bibitem[BPS18]{BeauchardP-18}
K.~Beauchard and K.~Pravda-Starov.
\newblock Null-controllability of hypoelliptic qua\-dra\-tic differential
  equations.
\newblock {\em J. \'Ec. polytech. Math.}, 5:1--43, 2018.

\bibitem[DE19]{DardeE-19}
J.~Dard\'{e} and S.~Ervedoza.
\newblock On the cost of observability in small times for the one-dimensional
  heat equation.
\newblock {\em Anal. PDE}, 12(6):1455--1488, 2019.

\bibitem[Dou66]{Douglas-66}
{R. G.} Douglas.
\newblock On majorization, factorization, and range inclusion of operators on
  hilbert space.
\newblock {\em Proc. Amer. Math. Soc.}, 2(17):413--415, 1966.

\bibitem[DR77]{DoleckiR-77}
S.~Dolecki and {D.~L.} Russell.
\newblock A general theory of observation and control.
\newblock {\em SIAM J. Control Optim.}, 2(15):185--220, 1977.

\bibitem[Egi]{Egidi-18-arxiv}
M.~Egidi.
\newblock On null-controllability of the heat equation on infinite strips and
  control cost estimate.
\newblock Math. Nachr., to appear.

\bibitem[EMZ15]{EscauriazaMZ-15}
L.~Escauriaza, S.~Montaner, and C.~Zhang.
\newblock Observation from measurable sets for parabolic analytic evolutions
  and applications.
\newblock {\em J. Math. Pures Appl.}, 104(5):837--867, 2015.

\bibitem[EV18]{EgidiV-18}
M.~Egidi and I.~Veseli{\'c}.
\newblock Sharp geometric condition for null-controllability of the heat
  equation on $\mathbb{R}^d$ and consistent estimates on the control cost.
\newblock {\em Arch. Math. (Basel)}, 111(1):1--15, 2018.

\bibitem[EV20]{EgidiV-16-arxiv}
M.~Egidi and I.~Veseli\'c.
\newblock Scale-free unique continuation estimates and {L}ogvi\-nenko-{S}ereda
  {T}heorems on the torus.
\newblock {\em Ann. Henri Poincar\'e}, 21(12):3757--3790, 2020.

\bibitem[EZ11]{ErvedozaZ-11}
S.~Ervedoza and E.~Zuazua.
\newblock Sharp observability estimates for heat equations.
\newblock {\em Arch. Ration. Mech. Anal.}, 202(3):975--1017, 2011.

\bibitem[FI96]{FursikovI-96}
{A.~V.} Fursikov and {O. Y.} Imanuvilov.
\newblock {\em Controllability of Evolution Equations}, volume~34 of {\em Suhak
  kang\v{u}irok}.
\newblock Seoul National University, Seoul, Korea, 1996.

\bibitem[FZ00]{FernandezZ-00}
E.~{Fern{\'a}ndez-Cara} and E.~Zuazua.
\newblock The cost of approximate controllability for heat equations: The
  linear case.
\newblock {\em Adv. Differential Equations}, 5(4--6):465--514, 2000.

\bibitem[Gd07]{GonzalezT-07}
M.~{Gonz{\'a}lez-Burgos} and L.~{de Teresa}.
\newblock Some results on controllability for linear and nonlinear heat
  equations in unbounded domains.
\newblock {\em Adv. Differential Equations}, 12(11):1201--1240, 2007.

\bibitem[G{\"{u}}i85]{Guichal-85}
{E.~N.} G{\"{u}}ichal.
\newblock A lower bound of the norm of the control operator for the heat
  equation.
\newblock {\em J. Math. Anal. Appl.}, 110(2):519--527, 1985.

\bibitem[Haa06]{Haase-06}
M.~Haase.
\newblock {\em The Functional Calculus for Sectorial Operators}, volume 169 of
  {\em Oper. Theory Adv. Appl.}
\newblock Birkh\"auser, Basel, 2006.

\bibitem[JL99]{JerisonL-99}
D.~Jerison and G.~Lebeau.
\newblock Nodal sets of sums of eigenfunctions.
\newblock In M.~Christ, {C.~E.} Kenig, and C.~Sadosky, editors, {\em Harmonic
  Analysis and Partial Differential Equations}, Chicago Lectures in
  Mathematics, pages 223--239. University of Chicago Press, Chicago, IL, 1999.

\bibitem[Kov00]{Kovrijkine-00}
O.~Kovrijkine.
\newblock {\em Some Estimates of {F}ourier Transforms}.
\newblock PhD thesis, California Institute of Technology, 2000.

\bibitem[Kov01]{Kovrijkine-01}
O.~Kovrijkine.
\newblock Some results related to the {L}ogvinenko-{S}ereda {T}heorem.
\newblock {\em Proc. Amer. Math. Soc.}, 129(10):3037--3047, 2001.

\bibitem[Lis12]{Lissy-12}
P.~Lissy.
\newblock A link between the cost of fast controls for the 1-d heat equation
  and the uniform controllability of a 1-d transport-diffusion equation.
\newblock {\em C. R. Math. Acad. Sci. Paris}, 350(11):591--595, 2012.

\bibitem[Lis15]{Lissy-15}
P.~Lissy.
\newblock Explicit lower bounds for the cost of fast controls for some 1-{D}
  parabolic or dispersive equations, and a new lower bound concerning the
  uniform controllability of the 1-{D} transport-diffusion equation.
\newblock {\em J. Differential Equations}, 259(10):5331--5352, 2015.

\bibitem[LL]{LaurentL-18-arxiv}
C.~Laurent and M.~L\'eautaud.
\newblock Observability of the heat equation, geometric constants in control
  theory, and a conjecture of {L}uc {M}iller.
\newblock Anal. PDE, to appear.

\bibitem[LL12]{LeRousseauL-12}
J.~{Le Rousseau} and G.~Lebeau.
\newblock On {C}arleman estimates for elliptic and parabolic operators.
  {A}pplications to unique continuation and control of parabolic equations.
\newblock {\em ESAIM Control Optim. Calc. Var.}, 18(3):712--747, 2012.

\bibitem[LM]{LebeauM-19-arxiv}
G.~Lebeau and I.~Moyano.
\newblock Spectral inequalities for the {S}chr{\"o}dinger operator.
\newblock 2019, arXiv:1901.03513.

\bibitem[LM16]{LeRousseauM-16}
J.~{Le Rousseau} and I.~Moyano.
\newblock Null-controllability of the {K}olmogorov equation in the whole phase
  space.
\newblock {\em J. Differential Equations}, 260(4):3193--3233, 2016.

\bibitem[LR95]{LebeauR-95}
G.~Lebeau and L.~Robbiano.
\newblock Contr{\^o}le exact de l'{\'e}quation de la chaleur.
\newblock {\em Comm. Partial Differential Equations}, 20(1--2):335--356, 1995.

\bibitem[LS74]{LogvinenkoS-74}
{V.~N.} Logvinenko and {Ju.~F.} Sereda.
\newblock Equivalent norms in spaces of entire functions of exponential type.
\newblock {\em Teor. Funkts., Funkts. anal. Prilozh.}, 20:102--111, 1974.

\bibitem[LZ98]{LebeauZ-98}
G.~Lebeau and E.~Zuazua.
\newblock Null-controllability of a system of linear thermoelasticity.
\newblock {\em Arch. Ration. Mech. Anal.}, 141(4):297--329, 1998.

\bibitem[Mil04a]{Miller-04}
L.~Miller.
\newblock Geometric bounds on the growth rate of null-controllability cost for
  the heat equation in small time.
\newblock {\em J. Differential Equations}, 204(1):202--226, 2004.

\bibitem[Mil04b]{Miller-04b}
L.~Miller.
\newblock How violent are fast controls for {S}chr\"{o}dinger and plate
  vibrations?
\newblock {\em Arch. Ration. Mech. Anal.}, 172(3):429--456, 2004.

\bibitem[Mil05a]{Miller-05}
L.~Miller.
\newblock On the null-controllability of the heat equation in unbounded
  domains.
\newblock {\em Bull. Sci. Math.}, 129(2):175--185, 2005.

\bibitem[Mil05b]{Miller-05b}
L.~Miller.
\newblock Unique continuation estimates for the laplacian and the heat equation
  on non-compact manifolds.
\newblock {\em Math. Res. Lett.}, 12(1):37--47, 2005.

\bibitem[Mil06a]{Miller-06}
L.~Miller.
\newblock The control transmutation method and the cost of fast controls.
\newblock {\em SIAM J. Control Optim.}, 45(2):762--772, 2006.

\bibitem[Mil06b]{Miller-06b}
L.~Miller.
\newblock On exponential observability estimates for the heat semigroup with
  explicit rates.
\newblock {\em Atti Accad. Naz. Lincei Rend. Lincei Mat. Appl.},
  17(4):351--366, 2006.

\bibitem[Mil10]{Miller-10}
L.~Miller.
\newblock A direct {L}ebeau-{R}obbiano strategy for the observability of
  heat-like semigroups.
\newblock {\em Discrete Contin. Dyn. Syst. Ser. B}, 14(4):1465--1485, 2010.

\bibitem[NTTV18]{NakicTTV-18}
I.~Naki\'c, M.~T\"aufer, M.~Tautenhahn, and I.~Veseli\'c.
\newblock Scale-free unique continuation principle, eigenvalue lifting and
  {W}egner estimates for random {S}chr\"odinger operators.
\newblock {\em Anal. PDE}, 11(4):1049--1081, 2018.

\bibitem[NTTV20a]{NakicTTV-18b-arxiv}
I.~Naki\'c, M.~T\"aufer, M.~Tautenhahn, and I.~Veseli\'c.
\newblock Sharp estimates and homogenization of the control cost of the heat
  equation on large domains.
\newblock {\em ESAIM Control Optim. Calc. Var.}, 26(54):26 pages, 2020.

\bibitem[NTTV20b]{NakicTTV-18-arxiv}
I.~Naki\'c, M.~T\"aufer, M.~Tautenhahn, and I.~Veseli\'c.
\newblock Unique continuation and lifting of spectral band edges of
  {S}chr\"odinger operators on unbounded domains.
\newblock {\em J. Spectr. Theory}, 10(3):843--885, 2020.
\newblock With an appendix by Albrecht Seelmann.

\bibitem[Phu04]{Phung-04}
{K.~D.} Phung.
\newblock Note on the cost of the approximate controllability for the heat
  equation with potential.
\newblock {\em J. Math. Anal. Appl.}, 295(2):527--538, 2004.

\bibitem[Phu18]{Phung-18}
{K.~D.} Phung.
\newblock Carleman commutator approach in logarithmic convexity for parabolic
  equations.
\newblock {\em Math. Control Relat. Fields}, 8(3{\&}4):899--933, 2018.

\bibitem[PW13]{PhungW-13}
Kim~Dang Phung and Gengsheng Wang.
\newblock An observability estimate for parabolic equations from a measurable
  set in time and its applications.
\newblock {\em J. Eur. Math. Soc. (JEMS)}, 15(2):681--703, 2013.

\bibitem[Sei84]{Seidman-84}
{T.~I.} Seidman.
\newblock Two results on exact boundary control of parabolic equations.
\newblock {\em Appl. Math. Optim.}, 11(2):145--152, 1984.

\bibitem[TT07]{TenenbaumT-07}
G.~Tenenbaum and M.~Tucsnak.
\newblock New blow-up rates for fast controls of {S}chr{\"o}dinger and heat
  equations.
\newblock {\em J. Differential Equations}, 243(1):70--100, 2007.

\bibitem[TT11]{TenenbaumT-11}
G.~Tenenbaum and M.~Tucsnak.
\newblock On the null-controllability of diffusion equations.
\newblock {\em ESAIM Control Optim. Calc. Var.}, 17(4):1088--1100, 2011.

\bibitem[Vie05]{Vieru-05}
A.~Vieru.
\newblock On null controllability of linear systems in {B}anach spaces.
\newblock {\em Systems Control Lett.}, 54(4):331--337, 2005.

\bibitem[WWZZ19]{WangWZZ-19}
G.~Wang, M.~Wang, C.~Zhang, and Y.~Zhang.
\newblock Observable set, observability, interpolation inequality and spectral
  inequality for the heat equation in $\mathbb{R}^n$.
\newblock {\em J. Math. Pures Appl.}, 126:144--194, 2019.

\bibitem[WZ17]{WangZ-17}
G.~Wang and C.~Zhang.
\newblock Observability inequalities from measurable sets for some abstract
  evolution equations.
\newblock {\em SIAM J. Control Optim.}, 55(3):1862--1886, 2017.

\bibitem[YLC06]{YuLC-06}
X.~Yu, K.~Liu, and P.~Chen.
\newblock On null controllability of linear systems via bounded control
  functions.
\newblock In {\em in Proceedings of the 2006 American Control Conference},
  pages 1458--1461. IEEE, Piscataway, 2006.

\end{thebibliography}
%
%

%
\end{document}